\theoremstyle{plain}
\newtheorem{theo}{Theorem}
\newtheorem{prop}{Proposition}
\newtheorem{defi}{Definition}
\theoremstyle{remark}
\newtheorem{rema}{Remark}
\newcommand{\QH}{\mathrm{QH}}
\newcommand{\G}{\mathrm{G}}
\renewcommand{\H}{\mathrm{H}}
\newcommand{\HH}{\mathcal{H}}
\newcommand{\IG}{\mathrm{IG}}
\newcommand{\OG}{\mathrm{OG}}
\newcommand{\ra}{\rightarrow}
\newcommand{\C}{\mathbb{C}}
\newcommand{\F}{\mathrm{F}}
\renewcommand{\P}{\mathbb{P}}
\newcommand{\Z}{\mathbb{Z}}
\newcommand{\Q}{\mathbb{Q}}
\newcommand{\EE}{\mathcal{E}}
\renewcommand{\SS}{\mathcal{S}}
\newcommand{\OO}{\mathcal{O}}
\newcommand{\relmid}[1]{\mathrel{}\middle#1\mathrel{}}
\title{Quantum product and parabolic orbits in homogeneous spaces}
\author{Cl\'{e}lia Pech - Institut Fourier \\ \url{clelia.pech@ujf-grenoble.fr}}
\begin{document}

\maketitle

\begin{abstract}
 Chaput, Manivel and Perrin proved in \cite{CMP4} a formula describing the quantum product by Schubert classes associated to cominuscule weights in a rational projective homogeneous space $X$. In the case where $X$ has Picard rank one, we link this formula to the stratification of $X$ by $P$-orbits, where $P$ is the parabolic subgroup associated to the cominuscule weight. We deduce a decomposition of the \emph{Hasse diagram} of $X$, \emph{i.e} the diagram describing the cup-product with the hyperplane class.
\end{abstract}

\section{Introduction}

Let $G$ be a semisimple algebraic group over $\C$, $B$ be a Borel subgroup and $T \subset B$ a maximal torus. We denote by $\Phi$ the set of roots of $G$ with respect to $T$, $\Phi^+$ the subset of positive roots with respect to $B$, $\Delta = \left\{ \alpha_1, \dots, \alpha_n \right\}$ the subset of simple roots and $W$ the Weyl group of $G$. A fundamental weight $\omega$ is said to be \emph{minuscule} if $|\langle \alpha^\vee,\omega \rangle| \leq 1$ for all $\alpha\in\Phi$, where $\alpha^\vee$ is the coroot of $\alpha$. It is said to be \emph{cominuscule} if it is minuscule for the dual root system. Fundamental weights will be denoted $\omega_1, \dots, \omega_n$, with the same order as in the notation of \cite{bbk}.

Let $Q \supset B$ be a parabolic subgroup of $G$ and denote by $X$ the homogeneous space $G/Q$. In \cite{CMP4}, Chaput, Manivel and Perrin proved a formula describing the quantum product in $X$ by special Schubert classes associated to cominuscule weights. These classes correspond to the elements of the image of Seidel's representation $\pi_1(G^\mathrm{ad}) \ra \QH^*(G/Q)^\times_\mathrm{loc}$ \cite{seidel}, where $G^\mathrm{ad}=G/Z(G)$ and $\QH^*(G/Q)^\times_\mathrm{loc}$ is the group of invertible elements in the small quantum cohomology ring $\QH^*(G/Q)$ localized in the quantum  parameters. Before stating this result, we introduce some notation for the quantum cohomology of $X$.

The quantum cohomology ring $\QH^*(X)$ of a homogeneous variety $X=G/Q$ is a deformation of its cohomology ring. Consider the parameter ring
\[
 \Lambda = \left\{ \sum_\beta a_\beta q^\beta \relmid{|} \beta\in\H_2^+(X,\Z), a_\beta \in\Z \right\},
\]
where the sums are finite, $\H_2^+(X,\Z)$ denotes the set of effective cycles in $H_2(X,\Z)$ and the $q^\beta$ are formal parameters such that $q^\beta q^{\beta'} = q^{\beta + \beta'}$.
As a $\Z$-module, the quantum cohomology ring $\QH^*(X)$ is isomorphic to $\H^*(X,\Z) \otimes_{\Z} \Lambda$.
Moreover, it admits a ring structure defined by the \emph{quantum product} $\star$, which is a deformation of the cup-product. A precise definition for the quantum product can be found in \cite{FP}. The group $H_2(X,\Z)$ contains $\Phi^\vee / \Phi_Q^\vee$, where $\Phi^\vee$ denotes the coroot lattice of $G$ and $\Phi^\vee_Q$ the coroot lattice of $Q$, hence positive coroots can be seen as effective classes $\beta\in\H_2^+(X,\Z)$.

Now let $\mathcal{I}$ be the set of vertices of the Dynkin diagram of $G$ corresponding to cominuscule weights. If $i\in \mathcal{I}$, let $v_i$ be the shortest element of the Weyl group $W$ such that $v_i\omega_i^\vee=w_0\omega_i^\vee$, where $\omega_i^\vee$ is the fundamental coweight associated to $i$ and $w_0$ is the longest element of $W$. Then the quantum product in $X$ by the Schubert class $\sigma_{v_i}$ Poincar\'{e} dual to the Schubert cycle $[X_{v_i w_0}]$ is given by the following formula :
\begin{theo}[{\cite[Thm.1]{CMP4}}]
\label{th:para.formula}
 For all $w\in W$ and for all $i\in  \mathcal{I}$, we have :
\[
 \sigma_{v_i}\star\sigma_{w}=q^{\eta_Q(\omega_i^\vee-w^{-1}(\omega_i^\vee))}\sigma_{v_i w},
\]
where $\eta_Q : \Phi^\vee\ra \Phi^\vee/\Phi^\vee_Q$ is the natural surjection.
\end{theo}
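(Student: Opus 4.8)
The plan is to prove the formula by realizing $\sigma_{v_i}$ as (the classical part of) the image under Seidel's representation of the generator of $\pi_1(G^{\mathrm{ad}})$ attached to the cominuscule coweight $\omega_i^\vee$, and then exploiting the strong structural constraints that Seidel elements satisfy. The first step is to observe that, since $\omega_i$ is cominuscule, the cocharacter $\omega_i^\vee : \C^\times \ra T \subset G$ descends to a cocharacter of $G^{\mathrm{ad}}$, hence defines a $\C^\times$-action on $X = G/Q$ whose Seidel element $S_i \in \QH^*(X)^\times_{\mathrm{loc}}$ is well defined. As indicated in the introduction, its relevant term is exactly $\sigma_{v_i}$: the fixed points of this action are indexed by $W/W_Q$, and the extremal one is precisely the one attached to $v_i$, the shortest element with $v_i\omega_i^\vee = w_0\omega_i^\vee$. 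This is where the definition of $v_i$ in the statement comes from.

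The key input is then that Seidel's representation takes values in the units of the localized quantum cohomology ring and is a group homomorphism. Consequently $\sigma_{v_i}$ is, up to an invertible monomial in the quantum parameters, the Seidel element $S_i$, hence invertible in $\QH^*(X)_{\mathrm{loc}}$; so quantum multiplication by $\sigma_{v_i}$ is a bijective, $\Lambda$-linear, graded endomorphism of $\QH^*(X)_{\mathrm{loc}}$. The heart of the argument is to compute this endomorphism on the Schubert basis. I would do this by localization at the torus fixed points: in the fixed-point basis the action of the Seidel element is a pure shift sending the fixed point $wW_Q$ to $v_i w W_Q$, accompanied by a power of $q$ recording the change of moment-map value, that is, the symplectic area of the gradient trajectory of the $\C^\times$-action joining the two fixed points. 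Translating this shift back to the Schubert basis yields $\sigma_w \mapsto q^\beta\,\sigma_{v_i w}$ for a single curve class $\beta$.

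It remains to pin down $\beta$ and the coefficient. The curve class is a degree computation: the rotation number of the action $\omega_i^\vee$ at the fixed point $w$ is governed by $w^{-1}(\omega_i^\vee)$, and the area swept from $wW_Q$ to $v_iwW_Q$ is exactly $\eta_Q\bigl(\omega_i^\vee - w^{-1}(\omega_i^\vee)\bigr)$; note that $\omega_i^\vee$ being dominant forces $\omega_i^\vee - w^{-1}(\omega_i^\vee)$ to be a non-negative combination of simple coroots, so this class is effective, as it must be. The coefficient equals $1$ because the structure constants of the quantum product are Gromov--Witten invariants, hence non-negative integers, while invertibility of $S_i$ forces the unique nonzero coefficient in each column to be a unit of $\Lambda$; the only non-negative unit is $1$.

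The main obstacle I anticipate lies in the second and third steps: rigorously matching the combinatorial left-multiplication $w \mapsto v_i w$ on $W/W_Q$ with the geometric action of the Seidel element on Schubert classes, and extracting the precise curve class $\eta_Q(\omega_i^\vee - w^{-1}(\omega_i^\vee))$ from the fixed-point and moment-map data. Both require a careful comparison between the localization basis and the Schubert basis, together with control over the rotation numbers of the $\C^\times$-action. The cominuscule hypothesis is exactly what makes these rotation numbers simple enough --- each pairing $\langle \alpha, \omega_i^\vee\rangle$ lying in $\{0,\pm 1\}$ --- for the shift to be clean and the Seidel element to act monomially, with no higher quantum corrections.
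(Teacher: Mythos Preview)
This theorem is not proved in the paper at all: it is quoted verbatim from \cite[Thm.~1]{CMP4} and used as a black box. The paper's contribution begins only afterwards, in relating the exponent $\eta_Q(\omega_i^\vee - w^{-1}(\omega_i^\vee))$ to the stratification of $X$ by $P_{\omega_i}$-orbits. So there is no ``paper's own proof'' to compare your attempt against.

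That said, your sketch is a plausible outline of the strategy actually used in \cite{CMP4}: identify $\sigma_{v_i}$ with a Seidel element, use invertibility to see that quantum multiplication by it permutes the Schubert basis up to monomials in $q$, and read off the curve class from the moment-map shift. Be aware, though, that several of your steps are genuinely nontrivial and not just bookkeeping. The passage from the fixed-point (localization) basis to the Schubert basis is not a simple relabeling, and your argument that ``invertibility forces the unique nonzero coefficient to be a unit, hence $1$'' presupposes that there \emph{is} a unique nonzero coefficient, which is exactly what needs to be shown. In \cite{CMP4} this monomiality is established by a more careful analysis (or, in related work, via Peterson--Woodward-type comparison), not by the soft invertibility argument you give. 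If you want to turn your sketch into a proof, that is the step to focus on.
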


The aim of this paper is to relate the above theorem to a stratification of $X=G/Q$ by $P_i$-orbits when $Q$ is a maximal parabolic and $P_i$ is the maximal parabolic associated to the weight $\omega_i$.
In Section \ref{sec:para.def}, we recall some well-known facts about parabolic orbits and we describe the parabolic orbits associated to cominuscule weights in the classical Grassmannians. Then in Section \ref{sec:link.para}, we explain the link between Thm. \ref{th:para.formula} and the stratification by parabolic orbits in $X$. We deduce in Section \ref{sec:Hasse} a decomposition of the Hasse diagram of the classical Grassmannians. 

\subsection*{Acknowledgements}

This paper reports on work done during my thesis. I would like to thank my advisor, Laurent Manivel, for his support and for valuable discussions. 
I am also indebted to Nicolas Perrin for many useful remarks and comments.

\section{Parabolic orbits}
\label{sec:para.def}

In \ref{subsec:para.def} we recall some classical facts about parabolic orbits in (generalized) flag varieties, and in \ref{subsec:para.class}, we give a more explicit description of parabolic orbits associated to cominuscule weights in the classical Grassmannians.

\subsection{Parabolic orbits in generalized flag varieties}
\label{subsec:para.def}

A \emph{generalized flag variety} is a variety of the form $X=G/Q$, where $G$ is a semisimple algebraic group, $B$ a Borel subgroup and $Q\supset B$ a  parabolic subgroup. Now consider a second parabolic subgroup $P\supset B$. We call \emph{$P$-orbits} or \emph{parabolic orbits} the orbits of $X$ under the action of $P$ by left multiplication. Here are some elementary properties of parabolic orbits, which can be found in \cite[Sec. 2.1]{perrin} :

\begin{prop}
\label{prop:par.base}
 \begin{enumerate}
  \item Every $P$-orbit can be written as $P w Q/Q$ with $w\in W$. 
  \item The $P$-orbits are smooth and locally closed, indexed by the double cosets $W_P \backslash W / W_Q$, where $W_P$ and $W_Q$ denote the Weyl groups associated to $P$ and $Q$. Moreover, they define a stratification of $X$ :
    \[
     X = \bigsqcup_{W_P w W_Q \in W_P \backslash W / W_Q} P w Q/Q.
    \]
  \item\label{it:par.base} The $P$-orbits are $B$-stable, hence they are a union of Schubert cells :
    \[
     P w Q/Q = \bigcup_{(w_P,w_Q)\in W_P\times W_Q} B w_P w w_Q Q / Q.
    \]
 \end{enumerate}
\end{prop}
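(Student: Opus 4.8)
The plan is to derive all three statements from the Bruhat decomposition together with its parabolic refinements. Recall $G = \bigsqcup_{w \in W} BwB$, and since $P$ and $Q$ are standard parabolics generated by $B$ together with subsets of the simple reflections, one has $P = \bigsqcup_{u \in W_P} BuB$ and $Q = \bigsqcup_{v \in W_Q} BvB$. These three decompositions, together with general facts about algebraic group actions, are the only input I would use.

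Part (1) is immediate: given any point $gQ/Q \in X$, the Bruhat decomposition gives $g \in BwB$ for some $w \in W$, and since $B \subseteq P$ and $B \subseteq Q$ we get $g \in PwQ$, so the $P$-orbit of $gQ/Q$ equals $PwQ/Q$. For the $B$-stability in part (\ref{it:par.base}), note that any $P$-orbit $\mathcal{O}$ satisfies $B\mathcal{O} = \mathcal{O}$ because $B \subseteq P$; as the $B$-orbits on $G/Q$ are exactly the Schubert cells $Bw'Q/Q$ (indexed by $W/W_Q$), the orbit $\mathcal{O}$ is automatically a union of Schubert cells. The precise form of part (\ref{it:par.base}) is the assertion that these cells are exactly those indexed by the Weyl double coset $W_P w W_Q$, since as $(w_P,w_Q)$ ranges over $W_P \times W_Q$ the products $w_P w w_Q$ range over $W_P w W_Q$.

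The technical heart, and what I expect to be the main obstacle, is therefore the identity
\[
 PwQ = \bigsqcup_{w' \in W_P w W_Q} Bw'B
\]
as subsets of $G$. The inclusion $\supseteq$ is clear from the parabolic decompositions. For $\subseteq$ I would invoke the standard multiplication rule $BsB \cdot BxB \subseteq BxB \cup BsxB$ for a simple reflection $s$ and any $x \in W$: writing the elements of $W_P$ and $W_Q$ as products of simple reflections and iterating this rule shows that $PwQ = (BW_P B)\, w\, (B W_Q B)$ meets only the cells $Bw'B$ with $w'$ in the double coset $W_P w W_Q$. This identity simultaneously yields the explicit decomposition of part (\ref{it:par.base}) and, because distinct Bruhat cells are disjoint, the fact that $PwQ = Pw'Q$ if and only if $W_P w W_Q = W_P w' W_Q$; hence the $P$-orbits are indexed by $W_P \backslash W / W_Q$.

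Finally, the geometric properties asserted in part (2) follow from general principles: every orbit of an algebraic group acting on a variety is smooth and locally closed, being open in its closure, and concretely $PwQ/Q \cong P/(P \cap \dot{w} Q \dot{w}^{-1})$ is a quotient of algebraic groups. Combined with part (1), the orbits cover $X$, and their pairwise disjointness — a consequence of the indexing just established — gives the stratification $X = \bigsqcup_{W_P w W_Q} PwQ/Q$.
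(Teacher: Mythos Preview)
Your argument is correct and is precisely the standard one: the parabolic Bruhat decomposition $P=\bigsqcup_{u\in W_P}BuB$, $Q=\bigsqcup_{v\in W_Q}BvB$ together with the rule $BsB\cdot BxB\subseteq BxB\cup BsxB$ gives $PwQ=\bigsqcup_{w'\in W_PwW_Q}Bw'B$, from which all three items follow as you describe. The paper itself does not supply a proof of this proposition; it simply records the statement and refers the reader to \cite[Sec.~2.1]{perrin}, so there is nothing further to compare.
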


We denote by $W^Q$ the set of minimal length representatives of cosets in $W/W_Q$, which inherits the Bruhat order of $W$. Let us describe the double cosets indexing parabolic orbits :
\begin{prop}
\label{prop:interval}
 Let $\EE=W_P w W_Q$ be a double coset in $W_P\backslash W/W_Q$. Then $\EE\cap W^Q$ contains unique minimal and maximal elements $w_{min}$ and $w_{max}$. Moreover, it is equal to the interval $\left[w_{min},w_{max}\right]$ for the Bruhat order in $W^Q$.
\end{prop}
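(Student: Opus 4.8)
The plan is to translate the statement into the geometry of the stratification of Proposition \ref{prop:par.base} and to read off the order-theoretic structure from the closure of a single orbit. Write $O=PwQ/Q$ and, for $u\in W^Q$, let $C_u=BuQ/Q$ be the corresponding Schubert cell. By Proposition \ref{prop:par.base}(\ref{it:par.base}) the orbit $O$ is exactly the union of the cells $C_u$ for $u\in\EE\cap W^Q$, so throughout I identify $\EE\cap W^Q$ with the set $S=\left\{u\in W^Q\relmid| C_u\subseteq O\right\}$, and the task is to show that $S$ has a unique minimum and a unique maximum and equals the Bruhat interval between them.

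First I would produce the maximum geometrically. Since $P$ is connected, $O$ is irreducible, hence its closure $\overline{O}$ is an irreducible, closed, $B$-stable subvariety of $X$, that is, a Schubert variety $X_{w_{max}}=\overline{C_{w_{max}}}$ for a unique $w_{max}\in W^Q$. As $O$ is dense in $\overline{O}$ and is a union of cells, its dense cell $C_{w_{max}}$ must lie in $O$, so $w_{max}\in S$; and since every cell of $\overline{O}=X_{w_{max}}$ is indexed by some $u\leq w_{max}$, we get $u\leq w_{max}$ for all $u\in S$. Thus $w_{max}$ is the unique Bruhat-maximal element of $S$.

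Next I would show that $S$ is up-closed inside $\left\{y\in W^Q\relmid|y\leq w_{max}\right\}$. The boundary $\partial O=\overline{O}\setminus O$, being a difference of two $P$-stable sets, is a union of $P$-orbits; in particular it is closed and $B$-stable, so $\left\{y\relmid|C_y\subseteq\partial O\right\}$ is a down-closed subset of $W^Q$ (if $C_y\subseteq\partial O$ and $z\leq y$, then $C_z\subseteq\overline{C_y}\subseteq\partial O$). Since $\left\{y\leq w_{max}\right\}$ is the disjoint union of this down-closed set and $S$, its complement $S$ is up-closed: whenever $y\in S$ and $y\leq z\leq w_{max}$, one has $z\in S$. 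For the minimum I would take $w_{min}$ to be the minimal-length element of the double coset $\EE$. It lies in $W^Q$ (being of minimal length in $\EE\supseteq w_{min}W_Q$, it is a fortiori minimal in its own right coset), hence $w_{min}\in S$, and by the standard lifting properties of minimal double-coset representatives one has $w_{min}\leq x$ for every $x\in\EE$, in particular $w_{min}\leq u$ for all $u\in S$.

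Finally I would assemble these facts: $w_{min}\leq u\leq w_{max}$ for every $u\in S$ gives $S\subseteq[w_{min},w_{max}]$, while up-closedness applied to $w_{min}\in S$ gives the reverse inclusion $[w_{min},w_{max}]\subseteq S$, whence $S=[w_{min},w_{max}]$ with the asserted unique extremal elements. The step I expect to be the genuine obstacle is the claim that the minimal-length representative of a double coset is Bruhat-below the entire coset: this is the only truly combinatorial input and requires a careful exchange/lifting argument (or a citation to the combinatorics of double cosets in Coxeter groups). Everything else is formal once the dictionary between closed $B$-stable subsets and down-closed subsets of $W^Q$ is in place; an alternative, fully combinatorial route would instead obtain both $w_{min}$ and $w_{max}$ directly from the theory of minimal and maximal double-coset representatives, using an order-reversing symmetry of $W^Q$ to pass between the two extremes.
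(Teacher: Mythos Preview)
Your argument is correct and follows essentially the same geometric route as the paper: both identify $w_{max}$ by recognizing $\overline{O}$ as an irreducible closed $B$-stable subvariety, hence a Schubert variety, and both deduce the interval property from the fact that $\overline{O}\setminus O$ is closed and $B$-stable, so that the corresponding set of cells is down-closed in $W^Q$ and its complement inside $[1,w_{max}]$ is up-closed. (One small quibble: closedness of $\overline{O}\setminus O$ is not a consequence of its being a union of $P$-orbits, but rather of $O$ being open in its closure; your conclusion is correct even if the ``in particular'' is a slight non sequitur.)

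The only genuine divergence is in how $w_{min}$ is produced. The paper does it geometrically by duality: it passes to the inverse double coset $W_Q w^{-1} W_P$, runs the same closure argument with the roles of $P$ and $Q$ swapped to get a unique maximal element there, and reads off $w_{min}$ from that. You instead take $w_{min}$ to be the minimal-length element of $\EE$ and invoke the combinatorial fact that it is Bruhat-below every element of the double coset. Both are legitimate; the fact you flag is indeed standard --- it follows from the length-additive decomposition $x=w_P\,w_{min}\,w_Q$ with $\ell(x)=\ell(w_P)+\ell(w_{min})+\ell(w_Q)$ for any $x\in\EE$, which via the subword characterization of Bruhat order gives $w_{min}\leq x$. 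Your route makes the external Coxeter-theoretic input explicit, while the paper's keeps the argument uniformly geometric at the cost of a slightly more opaque duality step.
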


\begin{proof}
 This statement is an exercise (without proof) in \cite[Chap. 4, § 1]{bbk}. Here we give a geometric proof. 

 Let $\OO$ be the $P$-orbit indexed by $\EE$. By Item \ref{it:par.base} of Prop. \ref{prop:par.base}, we have
 \[
  \OO = \bigcup_{w'\in\EE} C_{w'},
 \]
 where $C_{w'}=B w' Q/Q$ is the Schubert cell associated to $w'$. Hence the closure $\overline{\OO}$ of $\OO$ satisfies
 \[
  \overline{\OO} = \bigcup_{w'\in\EE} X_{w'},
 \]
 where $X_{w'} = \overline{C}_{w'}$ is the Schubert variety associated to $w'$. Moreover, $\overline{\OO}$ being $B$-stable, irreducible and closed, it is a Schubert variety $X_{w_{max}}$ with $w_{max}\in W^Q$. It follows that $w_{max}$ is in $\EE\cap W^Q$, and by definition $w'\leq w_{max}$ for each $w' \in \EE\cap W^Q$.

 Similarly, the double coset $\EE':=W_Q w^{-1} W_P$ admits a unique maximal element $\widetilde{w}\in W^Q$. Then $w_{min}:=\widetilde{w}$ is a unique minimal element in $\EE\cap W^Q$.

 Let us now prove that $\EE\cap W^Q$ is the interval $\left[w_{min},w_{max}\right]$. From the definition of $w_{min}$ and $w_{max}$, it is already clear that $\EE\cap W^Q$ is contained in the aforementioned interval. Moreover, the Schubert cell $C_{w_{min}}$ is contained in all Schubert cells $C_{w'}$ for $w'\in\EE\cap W^Q$. The boundary $\overline{\OO}\setminus\OO$ being closed and $B$-stable, it is a union of Schubert varieties $X_{w_i}$, and we may write :
 \[
  \overline{\OO}\setminus\OO = \bigsqcup_{i=1}^r X_{w_i}
 \]
 for some $w_i\in W^Q$. Let $C_{w'}$ be a Schubert cell in $X_{w_{max}}=\overline{\OO}$, where $w' \in W^Q$. It means that $w' \in\left[1,w_{max}\right]$. The cell $C_{w'}$ lies in $\OO$ if and only if $C_{w'} \not\subset X_{w_i}$, \emph{i.e} $w'\not\in\left[1,w_i\right]$ for each $1\leq i\leq r$. In particular, we get that $w_{min} \in \left[1,w_{max}\right] \setminus \bigcup_{1\leq i \leq r} \left[1,w_i\right]$. 

 Now let $w'$ be any element in $\left[w_{min},w_{max}\right]\subset W^Q$. If there existed an $i$ such that $w' \in \left[1,w_i\right]$, then since $w_{min}\leq w'$, we would have $w_{min} \in \left[1,w_i\right]$, which is impossible. Hence $w' \in \left[1,w_{max}\right] \setminus \bigcup_{i=1}^r \left[1,w_i\right]$, which means that $w' \in \EE\cap W^Q$ as required.
\end{proof}

In particular, we see that parabolic orbits correspond to sub-intervals of $W^Q$. The next result describes them as the total space of a vector bundle over another generalized flag variety. 

First of all, consider the Levi decomposition $P = L \ltimes U$, where $L$ is a Levi subgroup and $U$ is the unipotent radical of $P$. If $\OO$ is a $P$-orbit associated to a double coset $W_P w_{min} W_Q$, then we define the following subset of the set $\Delta$ of simple roots of $G$ :
\[
 K_{w_{min}} = \left\{s \in \Delta(P) \mid w_{min}^{-1} s w_{min} \in \Delta(Q) \right\},
\]
where for any parabolic subgroup $R \subset G$, $\Delta(R) \subset \Delta$ is such that the associated reflections, together with $B$, generate $R$. Denote by $R_{w_{min}}$ the parabolic subgroup of $L$ generated by $K_{w_{min}}$ and $B\cap L$. We have the following geometric description of parabolic orbits :

\begin{theo}[\cite{mitchell}, Thm. 1.1]
\label{theo:para.descr}
 Let $\OO$ be the $P$-orbit associated to a double coset $W_P w_{min} W_Q$, where $P$ is a parabolic subgroup associated to a cominuscule weight. Then there exists a representation $V_{w_{min}}$ of $R_{w_{min}}$ such that $\OO \cong L \times_{R_{w_{min}}} V_{w_{min}}$ and the map $\OO \ra L/R_{w_{min}}$ is a vector bundle.
\end{theo}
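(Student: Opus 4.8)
The plan is to realise the orbit as a homogeneous space and then exploit the cominuscule hypothesis to split it into a base and a linear fibre. First I would fix the minimal representative $w_{min}$ and write $\OO = P\cdot(w_{min}Q/Q) \cong P/H$, where $H = P \cap w_{min}Qw_{min}^{-1}$ is the stabiliser of the base point. Since $w_{min}$ normalises $T$, the group $H$ contains $T$ and is therefore $T$-regular, determined by its set of roots, which one computes to be $\Phi(H)=\Phi(P)\cap w_{min}(\Phi(Q))$, where for a $T$-stable subgroup $S$ I write $\Phi(S)$ for the roots of its Lie algebra. The crucial input is that $\omega_i$ is cominuscule, which is equivalent to $\alpha_i$ occurring with coefficient $1$ in the highest root, and hence to the nilradical $\mathfrak{u}$ of $\mathrm{Lie}(P)$ being abelian. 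Consequently the unipotent radical $U$ is a commutative unipotent group, $\exp\colon\mathfrak{u}\xrightarrow{\sim}U$ is an isomorphism, and $U$ is a vector space on which the Levi $L$ acts linearly through $\mathrm{Ad}$. This abelianness is exactly what will force the fibre to be a vector space rather than a general unipotent quotient.

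Next I would establish the Levi-type decomposition of the stabiliser, $H = (L\cap H)\ltimes(U\cap H)$, and identify $L\cap H$ with $R_{w_{min}}$. Because $H$ is $T$-regular its roots split as $\Phi(H) = \big(\Phi(H)\cap\Phi(L)\big)\sqcup\big(\Phi(H)\cap\Phi(U)\big)$; the first piece is the root system of $L\cap H$ and the second, being contained in the roots of the abelian normal subgroup $U$, gives a subgroup $U\cap H$ normalised by $L\cap H$. The content here is the combinatorial identification $\Phi(L)\cap w_{min}(\Phi(Q)) = \Phi(R_{w_{min}})$: unwinding this intersection in terms of simple roots recovers precisely the condition defining $K_{w_{min}}$, namely that a simple reflection $s\in\Delta(P)$ lies in $L\cap H$ iff $w_{min}^{-1}sw_{min}\in\Delta(Q)$, and the minimality of $w_{min}$ ensures that $B\cap L\subseteq H$, so that $L\cap H$ is the standard parabolic of $L$ generated by $B\cap L$ and $K_{w_{min}}$, i.e. $R_{w_{min}}$.

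Finally I would assemble the bundle. Define $\pi\colon P/H\to L/R_{w_{min}}$ by $lu\,H\mapsto lR_{w_{min}}$ for $l\in L$, $u\in U$; using the decomposition of $H$ one checks this is well defined, $L$-equivariant and surjective, with fibre over the base point $UH/H\cong U/(U\cap H)$. Since $U$ is abelian, $V_{w_{min}}:=U/(U\cap H)$ is a vector space, and as $U\cap H$ is $R_{w_{min}}$-stable the group $R_{w_{min}}$ acts on it linearly through $\mathrm{Ad}$; this is the sought representation. The map $L\times V_{w_{min}}\to P/H$, $(l,\bar u)\mapsto lu\,H$, is invariant for the twisted action $r\cdot(l,\bar u)=(lr^{-1},\mathrm{Ad}(r)\bar u)$ and descends to an isomorphism $L\times_{R_{w_{min}}}V_{w_{min}}\xrightarrow{\sim}\OO$ covering $\pi$, exhibiting $\pi$ as a vector bundle. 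I expect the main obstacle to be the second step: proving that the stabiliser genuinely splits as a semidirect product with Levi factor exactly $R_{w_{min}}$, and checking that the combinatorial set $K_{w_{min}}$ matches the roots $\Phi(L)\cap w_{min}(\Phi(Q))$. Once the abelianness of $U$ is in hand, the passage to a vector bundle in the last step is formal.
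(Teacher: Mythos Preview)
The paper does not give its own proof of this statement: it is quoted as \cite[Thm.~1.1]{mitchell}, and the subsequent remark only points to an analogous result in \cite[Prop.~5]{perrin} and explains why the cominuscule hypothesis is necessary. So there is no ``paper's proof'' to compare against.

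That said, your sketch is essentially the standard argument and is sound. A couple of points worth tightening. First, the splitting $H=(L\cap H)\ltimes(U\cap H)$ does not follow just from the root-space decomposition of $\mathfrak h$; you should say why $H$ is connected (it is the intersection of two parabolic subgroups containing the common torus $T$, hence connected), so that the Lie-algebra semidirect product integrates to one on the group level. Second, the identification $L\cap H=R_{w_{min}}$ requires knowing that $B\cap L\subset w_{min}Qw_{min}^{-1}$; this is exactly where the \emph{minimality} of $w_{min}$ in the double coset enters, and you should make explicit which characterisation of minimal double-coset representatives you are invoking (e.g.\ that $w_{min}^{-1}$ sends every positive root of $L$ to a positive root). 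Once these two points are nailed down, the rest --- that $U$ abelian makes $U/(U\cap H)$ a linear $R_{w_{min}}$-representation via $\mathrm{Ad}$, and the associated-bundle description --- is indeed formal, and matches the way the result is used in the paper (and the reason the remark after the theorem stresses that without the cominuscule hypothesis one only gets an affine-fibre locally trivial map, not a vector bundle).
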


\begin{rema}
 \begin{itemize}
  \item An analogous result is proved in \cite[Prop. 5]{perrin}.
  \item Note that if $P$ is not associated to a cominuscule weight, we still have a locally trivial map with affine fibers, but it is no longer a vector bundle, as stated at the end of the proof of \cite[Prop. 5]{perrin}. For instance, take $X = \Q_3 \cong\OG(1,5) \subset \P^4$ the $3$-dimensional quadric and $P=P_{\omega_2}$. The open orbit $\OO$ is $\Q_3 \setminus\P^1$. The fibration $\OO \ra \P^1$ is locally trivial, the fibers are $2$-dimensional affine spaces, but a simple calculation shows that the transition maps are quadratic.
 \end{itemize}
\end{rema}

A consequence of Thm. \ref{theo:para.descr} is that the cohomology ring of the parabolic orbit $\OO$ and of the generalized flag variety $L/R_{w_{min}}$ are isomorphic (see \cite[Chap. 3]{fulton}), which will help us to find decompositions of the Hasse diagrams in Section \ref{sec:Hasse}.

\subsection{Parabolic orbits associated to cominuscule weights in the classical Grassmannians}
\label{subsec:para.class}

For us, a \emph{classical Grassmannian} will be a homogeneous space $X=G/Q$, where $G$ is of type $A_n$, $B_n$, $C_n$ or $D_n$ and $Q$ is a maximal parabolic subgroup of $G$.
In type $A_n$, it corresponds to the usual Grassmannians $\G(m,n+1)$ for $1\leq m\leq n$, while in type $C_n$, we get the symplectic Grassmannians $\IG(m,2n)$ with $1 \leq m \leq n$. Finally, in type $B_n$ (resp. in type $D_n$), we obtain the odd orthogonal (resp. even orthogonal) Grassmannians $\OG(m,2n+1)$ (resp. $\OG(m,2n)$), where $1 \leq m \leq n$. In type $D_n$, we furthermore exclude the case where $m=n-1$, since it corresponds to a variety with Picard number two.

We start by giving the list of cominuscule weights, including the exceptional cases :

\vspace{0,3cm}
\begin{center}
\begin{tabular}{|c|l|l|}
 \hline
 Type   & Classical Grassmannians & Cominuscule weights  		    \\
 \hline
 \hline
 $A_n$  & $\G(m,n+1)$ $1 \leq m \leq n$    & $\omega_i$ $(1\leq i\leq n)$  	    \\
 \hline
 $B_n$  & $\OG(m,2n+1)$ $1 \leq m \leq n$  & $\omega_1$		       	   	    \\
 \hline
 $C_n$  & $\IG(m,2n)$ $1 \leq m \leq n$    & $\omega_n$		           	    \\
 \hline
 $D_n$  & $\OG(m,2n)$ $1 \leq m \leq n,$ $m\neq n-1$ & $\omega_1$, $\omega_{n-1}$, $\omega_n$ \\
 \hline
 $E_6$  & $E_6/P_j$ $1\leq j\leq 6$ & $\omega_1$, $\omega_6$ \\
 \hline
 $E_7$  & $E_7/P_j$ $1\leq j\leq 7$ & $\omega_7$ \\
 \hline
\end{tabular}
\end{center}

In the following sections, following Thm. \ref{theo:para.descr}, we describe the parabolic orbits associated to the above cominuscule weights for classical Grassmannians. We will not treat the exceptional cases in general since in these examples, flags and Schubert varieties are not so easily described. We will only mention the case of the Cayley plane $E_6/P_1$ in Section \ref{sec:Hasse}. However, it would probably be possible to get similar results for all exceptional cases, using the description of flags introduced by Iliev and Manivel in \cite{IM} for type $E_6$ and by Garibaldi in \cite{garibaldi} for type $E_7$. 

We will denote by $P_{\omega_i}$ the maximal parabolic subgroup containing the Borel subgroup $B$ and associated to the cominuscule fundamental weight $\omega_i$. In \ref{subsub:geom.descr}, we give a geometric description of the $P_{\omega_i}$-orbits, whereas in \ref{subsub:comb.descr}, we give a combinatorial description of the double cosets indexing them.

\subsubsection{Geometric description of parabolic orbits}
\label{subsub:geom.descr}

First we need to recall the characterization of the flag stabilized by the Borel subgroup $B$ in each of the classical types :
\begin{enumerate}[Type A$_n$ :]
 \item $B$ is the stabilizer of a (uniquely defined) complete flag 
  \[
   0 =E_0 \subset E_1 \subset \dots \subset E_n \subset E_{n+1}=\C^{n+1},
  \]
  the element $E_i$ being an $i$-dimensional subspace of $\C^{n+1}$.
 \item $B$ is the stabilizer of a \emph{type $B_n$ complete isotropic flag}
  \[
   0 =E_0 \subset E_1 \subset \dots \subset E_n \subset E_{n+1} \subset \dots \subset E_{2n} \subset E_{2n+1}=\C^{2n+1},
  \]
  where the vector spaces $E_1,\dots,E_n$ are isotropic and for each $1 \leq i \leq n$, we have $E_{n+i} = E_{n+1-i}^\perp$.
 \item $B$ is the stabilizer of a \emph{type $C_n$ complete isotropic flag}
  \[
   0 = E_0 \subset E_1 \subset \dots \subset E_n \subset E_{n+1} \subset \dots \subset E_{2n} = \C^{2n},
  \]
  where the $E_i$'s are isotropic and for each $0 \leq i \leq n$, we have $E_{n+i} = E_{n-i}^\perp$.
 \item $B$ is the stabilizer of a \emph{type $D_n$ complete isotropic flag}

  \begin{tikzpicture}[scale = 0.6]
 \node at (0,0.05) {$0$};
 \node at (1,0) {$=$};
 \node at (2,0) {$E_0$};
 \node at (3,0.05) {$\subset$};
 \node at (4,0) {$\ldots$};
 \node at (5,0.05) {$\subset$};
 \node at (6,0) {$E_{n-2}$};
 \node[rotate=35] at (7.1,0.5) {$\subset$};
 \node[rotate=-35] at (7.1,-0.5) {$\subset$};
 \node at (8,1) {$E_n$};
 \node at (8,-1) {$E'_n$};
 \node at (8,0) {$\neq$};
 \node[rotate=-35] at (8.9,0.5) {$\subset$};
 \node[rotate=35] at (8.9,-0.5) {$\subset$};
 \node at (10,0) {$E_{n+1}$};
 \node at (11,0.05) {$\subset$};
 \node at (12,0) {$\ldots$};
 \node at (13,0.05) {$\subset$};
 \node at (14,0) {$E_{2n}$};
 \node at (15,0) {$=$};
 \node at (16,0.05) {$\mathbb{C}^{2n}$};
\end{tikzpicture}

 where the vector spaces $E_1,\dots,E_{n-2}$ are isotropic, $E_n$ is a type 1 maximal isotropic subspace, $E_n'$ a type 2 isotropic subspace, $E_{n+1} = (E_n\cap E_n')^\perp$ and for each $1 \leq i \leq n-1$,  $E_{n+1+i} = E_{n-1-i}^\perp$.
\end{enumerate}

Now we prove that $P$-orbits associated to cominuscule weights in the classical Grassmannians can be described by the relative position of their elements with respect to a certain partial flag associated to the cominuscule weight defining $P$. In the following proposition, the unique complete flag stabilized by the Borel subgroup will be denoted as above .
\begin{prop}
\label{prop:geom.descr.1}
 \begin{enumerate}
  \item\label{it:A} If $X=\G(m,n+1)$ and $P=P_{\omega_i}$ for $1\leq i\leq n$, then the $P$-orbits are the
   \[
    \OO_d := \left\{ \Sigma \in X \mid \dim(\Sigma\cap E_i) = d \right\},
   \]
   for $\max(0,i+m-n-1) \leq d \leq \min(m,i)$.
  \item\label{it:B} \begin{enumerate}[a)]
         \item\label{it:B.nonmax} If $X=\OG(m,2n+1)$ with $m<n$ and $P=P_{\omega_1}$, then the $P$-orbits are
	  \begin{align*}
	    \OO_0 &:= \left\{ \Sigma \in X \mid \Sigma \not\subset E_1^{\perp} \right\}, \\
	    \OO_1 &:= \left\{ \Sigma \in X \mid \Sigma \subset E_1^{\perp} \text{ and } \Sigma \not\supset E_1 \right\}, \\
	    \OO_2 &:= \left\{ \Sigma \in X \mid \Sigma \supset E_1 \right\}.
	  \end{align*}
	 \item\label{it:B.max} If $X=\OG(n,2n+1)$ and $P=P_{\omega_1}$, then the $P$-orbits are
	  \begin{align*}
	   \OO_0 &:= \left\{ \Sigma \in X \mid \Sigma \not\supset E_1 \right\}, \\
	   \OO_1 &:= \left\{ \Sigma \in X \mid \Sigma \supset E_1 \right\}.
	  \end{align*}
        \end{enumerate}
  \item\label{it:C} If $X=\IG(m,2n)$ and $P=P_{\omega_n}$, then the $P$-orbits are the
   \[
    \OO_d := \left\{ \Sigma \in X \mid \dim (\Sigma \cap E_n) = d \right\}
   \]
   for $0 \leq d \leq m$.
  \item\label{it:D} \begin{enumerate}[a)]
         \item\label{it:D.nonmax.1} If $X=\OG(m,2n)$ with $m<n-1$ and $P=P_{\omega_1}$, then the $P$-orbits are defined as in case \ref{it:B.nonmax}.
         \item\label{it:D.nonmax.n-1} If $X=\OG(m,2n)$ with $m<n-1$ and $P=P_{\omega_{n-1}}$, then the $P$-orbits are the
	  \[
	   \OO_d := \left\{ \Sigma \in X \mid \dim (\Sigma \cap E_n') = d \right\}
	  \]
	  for $0 \leq d \leq m$.
	 \item\label{it:D.nonmax.n} If $X=\OG(m,2n)$ with $m<n-1$ and $P=P_{\omega_n}$, then the $P$-orbits are defined as in case \ref{it:D.nonmax.n-1}, with $E_n'$ replaced by $E_n$.
         \item\label{it:D.max.1} If $X=\OG(n,2n)\cong\OG(n-1,2n-1)$ and $P=P_{\omega_1}$, then the $P$-orbits are defined as in case \ref{it:B.max}.
         \item\label{it:D.max.n-1} If $X=\OG(n,2n)$ and $P=P_{\omega_{n-1}}$, then the $P$-orbits are the
	  \[
	   \OO_d := \left\{ \Sigma \in X \mid \dim (\Sigma \cap E_n') = 2d + \epsilon' \right\}
	  \]
	  for $0 \leq d \leq \lfloor \frac{n-1}{2} \rfloor$, where $\epsilon' = 0$ if $n$ is odd and $1$ if $n$ is even.
	 \item\label{it:D.max.n} If $X=\OG(n,2n)$ and $P=P_{\omega_n}$, then the $P$-orbits are defined as in case \ref{it:D.max.n-1}, with $E_n'$ replaced by $E_n$ and $\epsilon'$ replaced by $\epsilon := 1-\epsilon'$.
        \end{enumerate}
 \end{enumerate}
\end{prop}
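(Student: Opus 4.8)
The plan is to treat every case by the same two-step scheme. First I would identify $P_{\omega_i}$ as the stabilizer in $G$ of the distinguished subspace appearing in the defining condition of $\OO_d$: this is $E_i$ in case \ref{it:A}, the isotropic line $E_1$ (equivalently its orthogonal $E_1^\perp$) in cases \ref{it:B} and \ref{it:D.nonmax.1}, the maximal isotropic subspace $E_n$ in case \ref{it:C}, and one of $E_n$, $E_n'$ in the remaining type $D$ cases. Since the quantities $\dim(\Sigma\cap E_i)$, $\dim(\Sigma\cap E_n)$, and so on depend only on the $P$-orbit of $\Sigma$, each $\OO_d$ is automatically $P$-stable, hence a union of $P$-orbits, and the $\OO_d$ visibly partition $X$. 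The stated range of $d$ is then a routine dimension count; for instance in case \ref{it:A} one gets $d\leq\min(m,i)$ because $\Sigma\cap E_i$ embeds into both $\Sigma$ and $E_i$, while $\dim(\Sigma+E_i)=m+i-d\leq n+1$ forces $d\geq i+m-n-1$.

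The heart of the argument is the transitivity of $P$ on each $\OO_d$. In the type $A$ case this is the classical statement that the subgroup of $\mathrm{GL}(\C^{n+1})$ preserving $E_i$ acts transitively on the $m$-dimensional subspaces $\Sigma$ with a prescribed value of $\dim(\Sigma\cap E_i)$: given two such subspaces, one builds bases of $\C^{n+1}$ adapted to $E_i$ and to each of the $\Sigma$'s, and the change-of-basis map lies in $P$. In the isotropic cases the analogous statement is furnished by Witt's extension theorem: any isomorphism between two subspaces respecting the restriction of the symmetric (resp. symplectic) form extends to a global isometry, and one checks that it can be chosen to preserve the fixed isotropic flag defining $P$. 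The orbit invariant is again a single intersection dimension, supplemented in cases \ref{it:B} and \ref{it:D.nonmax.1} by the alternative ``$\Sigma\subset E_1^\perp$'' versus ``$\Sigma\supset E_1$''.

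The main obstacle, and the only place where the result is not an immediate corollary of Witt's theorem, is the maximal orthogonal case \ref{it:D.max.n-1} and \ref{it:D.max.n}. Here $X=\OG(n,2n)$ is a single connected component of the orthogonal Grassmannian, and the fixed maximal isotropic subspace lies in a definite family. Witt's theorem still shows that $\Sigma,\Sigma'\in X$ are $P$-conjugate as soon as $\dim(\Sigma\cap E_n)=\dim(\Sigma'\cap E_n)$, but the parity of this number is constrained: when $\Sigma$ and the fixed subspace lie in the same family the codimension $n-\dim(\Sigma\cap E_n)$ is always even, whereas for subspaces in opposite families it is always odd. This is exactly the source of the reparametrisation $\dim(\Sigma\cap E_n')=2d+\epsilon'$, of the shift $\epsilon=1-\epsilon'$, and of the range $0\leq d\leq\lfloor\frac{n-1}{2}\rfloor$. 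I would derive this parity statement from the standard description of the two families of maximal isotropic subspaces and then check the two shifts directly according to the parity of $n$.

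As a consistency check that each $\OO_d$ is a single orbit rather than a larger union, one can compare the number of sets $\OO_d$ with the number $|W_P\backslash W/W_Q|$ of double cosets provided by Proposition \ref{prop:par.base}, using Proposition \ref{prop:interval} to match each orbit with an interval of $W^Q$. The transitivity argument above makes this comparison logically unnecessary, but it furnishes a useful independent verification, particularly in the delicate type $D$ cases.
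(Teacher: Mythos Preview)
Your proposal is correct and follows the same approach as the paper: identify $P$ as the stabilizer of the relevant partial flag, observe that the intersection dimensions are $P$-invariant, and then argue that these invariants separate the orbits. The paper's own proof is considerably terser---it simply asserts the converse direction without justification---so your explicit appeal to Witt's extension theorem (and the parity discussion in the maximal type $D$ case) supplies the missing transitivity argument that the paper leaves to the reader.
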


\begin{proof}
 The parabolic subgroup $P$ is the stabilizer of the following partial flags :
 \begin{itemize}
  \item $E_i$ in case \ref{it:A} ;
  \item $E_1 \subset E_1^{\perp}$ in cases \ref{it:B}, \ref{it:D.nonmax.1} and \ref{it:D.max.1} ;
  \item $E_n=E_n^\perp$ in cases \ref{it:C}, \ref{it:D.nonmax.n} and \ref{it:D.max.n} ;
  \item $E_n'={E_n'}^\perp$ in cases \ref{it:D.nonmax.n-1} and \ref{it:D.max.n-1}.
 \end{itemize}
 Hence the dimensions of the intersections with each element of these partial flags are constant on the $P$-orbits, and conversely, the sets where these dimensions are constant are exactly the $P$-orbits.
\end{proof}

We conclude the section by giving in each classical type an explicit description of the fibration introduced in Thm. \ref{theo:para.descr}. In the following result, the orbits $\OO_d$ are the ones defined in Prop. \ref{prop:geom.descr.1}.
\begin{prop}
 \label{prop:fibr}
 \begin{enumerate}
  \item If $X=\G(m,n+1)$ and $P=P_{\omega_i}$ for $1\leq i\leq n$, then the fibrations are the
   \[
    \begin{array}{ccc}
     \OO_d & \ra & \G(d,E_i) \times G(m-d,\C^{n+1} / E_i) \\
     \Sigma & \mapsto & \left( \Sigma \cap E_i , \Sigma / (\Sigma \cap E_i) \right)
    \end{array}
   \]
  \item \begin{enumerate}[a)]
         \item If $X=\OG(m,2n+1)$ with $m<n$ and $P=P_{\omega_1}$, then the fibrations are the
	  \[
	   \begin{array}{ccc}
	    \OO_d & \ra & \OG(m-\epsilon,E_1^\perp / E_1) \\
	    \Sigma & \mapsto & \left[ \Sigma \cap E_1^\perp \right]
	   \end{array}
	  \]
	  where $\epsilon = 1$ if $d=0,2$ and $\epsilon = 0$ if $d=1$.
	 \item If $X=\OG(n,2n+1)$ and $P=P_{\omega_1}$, then the fibrations are the
	  \[
	   \begin{array}{ccc}
	    \OO_d & \ra & \OG(m-1,E_1^\perp / E_1) \\
	    \Sigma & \mapsto & \left[ \Sigma \cap E_1^\perp \right]
	   \end{array}
	  \]
        \end{enumerate}
  \item If $X=\IG(m,2n)$ and $P=P_{\omega_n}$, then the fibrations are the
   \[
    \begin{array}{ccc}
     \OO_d & \ra & \F(d,n-m+d;E_n) \\
     \Sigma & \mapsto & \left( (\Sigma \cap E_n) \subset (\Sigma^\perp \cap E_n) \right)
    \end{array}
   \]
  \item \begin{enumerate}[a)]
         \item If $X=\OG(m,2n)$ with $m<n-1$ and $P=P_{\omega_1}$, then the fibrations are defined as in case \ref{it:B.nonmax}.
         \item If $X=\OG(m,2n)$ with $m<n-1$ and $P=P_{\omega_{n-1}}$, then the fibrations are
	  \[
	   \begin{array}{ccc}
	    \OO_d & \ra & \F(d,n-m+d;E_n') \\
	    \Sigma & \mapsto & \left( (\Sigma \cap E_n') \subset (\Sigma^\perp \cap E_n') \right)
	   \end{array}
	  \]
	 \item If $X=\OG(m,2n)$ with $m<n-1$ and $P=P_{\omega_n}$, then the fibrations are defined as in case \ref{it:C}.
         \item If $X=\OG(n,2n)\cong\OG(n-1,2n-1)$ and $P=P_{\omega_1}$, then the fibrations are defined as in case \ref{it:B.max}.
         \item If $X=\OG(n,2n)$ and $P=P_{\omega_{n-1}}$, then the fibrations are
	  \[
	   \begin{array}{ccc}
	    \OO_d & \ra & \G(2d+\epsilon',E_n') \\
	    \Sigma & \mapsto & \Sigma \cap E_n' 
	   \end{array}
	  \]
	  where $\epsilon' = 0$ if $n$ is odd and $1$ if $n$ is even.
	 \item If $X=\OG(n,2n)$ and $P=P_{\omega_n}$, then the fibrations are defined as in case \ref{it:D.max.n-1}, with $E_n'$ replaced by $E_n$ and $\epsilon'$ replaced by $\epsilon := 1 - \epsilon$.
        \end{enumerate}
 \end{enumerate}
\end{prop}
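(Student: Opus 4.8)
The plan is to construct, in each case, the stated map $\phi\colon\OO_d\to\mathcal{B}$ directly from the linear-algebra data that already cuts out the orbit in Prop. \ref{prop:geom.descr.1}, and then to recognize $\phi$ as the vector bundle projection produced abstractly by Thm. \ref{theo:para.descr}. Concretely, for each case I would (i) check that $\phi$ is well-defined, i.e. that every subspace appearing in $\phi(\Sigma)$ has the dimension prescribed by the target flag variety $\mathcal{B}$; (ii) check that $\phi$ is equivariant for the Levi $L$ of $P$ and surjective, identifying $\mathcal{B}$ with a single $L$-orbit $L/R_{w_{min}}$; and (iii) identify the fibers with affine spaces. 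Since $L\subset P$ preserves the partial flag stabilized by $P$ (listed in the proof of Prop. \ref{prop:geom.descr.1}), it acts on each piece of $\phi(\Sigma)$, so $\phi$ is automatically $L$-equivariant, and its image is a single $L$-orbit; transitivity of $L$ on the model spaces (a product of Grassmannians in type $A$, a two-step flag variety in the symplectic and non-maximal type-$D$ cases, an orthogonal Grassmannian in the $\omega_1$ cases, an ordinary Grassmannian in the maximal type-$D$ cases) gives surjectivity and the identification $\mathcal{B}\cong L/R_{w_{min}}$.

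The only nontrivial part of well-definedness lies in the isotropic cases. In type $C_n$ (case \ref{it:C}), since $E_n=E_n^\perp$ is maximal isotropic, for an isotropic $\Sigma$ with $\dim(\Sigma\cap E_n)=d$ one has $(\Sigma^\perp\cap E_n)^\perp=\Sigma+E_n$, whence $\dim(\Sigma^\perp\cap E_n)=2n-\dim(\Sigma+E_n)=2n-(m+n-d)=n-m+d$, exactly the second index of $\F(d,n-m+d;E_n)$; the identical computation in the non-maximal type-$D$ cases \ref{it:D.nonmax.n-1} gives $\F(d,n-m+d;E_n')$. For the $\omega_1$ orthogonal cases (\ref{it:B} and \ref{it:D.nonmax.1}) one determines $\epsilon$ by tracking $\Sigma$ against $E_1\subset E_1^\perp$: when $\Sigma\supset E_1$ the image $\Sigma/E_1$ is $(m-1)$-dimensional isotropic ($\epsilon=1$); when $\Sigma\subset E_1^\perp$ but $E_1\not\subset\Sigma$ the projection $E_1^\perp\to E_1^\perp/E_1$ is injective on $\Sigma$, giving an $m$-dimensional isotropic ($\epsilon=0$); and when $\Sigma\not\subset E_1^\perp$ then $\Sigma\cap E_1^\perp$ is $(m-1)$-dimensional and meets $E_1$ trivially, again projecting to an $(m-1)$-dimensional isotropic ($\epsilon=1$). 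One checks that the induced form on $E_1^\perp/E_1$ (and on $E_n'$, $E_n$) is nondegenerate of the right classical type, so the targets are genuine orthogonal Grassmannians. In the maximal type-$D$ cases \ref{it:D.max.n-1} and \ref{it:D.max.n} the parity constraint $\dim(\Sigma\cap E_n')=2d+\epsilon'$ is forced by the classical fact that two maximal isotropics meet in a subspace whose dimension is congruent to $n$ modulo $2$ precisely when they lie in the same family; this parity is exactly what produces the index $2d+\epsilon'$ and the ordinary Grassmannian $\G(2d+\epsilon',E_n')$ as base.

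It remains to describe the fibers. Fixing a point of $\mathcal{B}$, the fiber is the set of $\Sigma\in\OO_d$ realizing the prescribed intersection data; in type $A$ this is a torsor under $\mathrm{Hom}(\Sigma/(\Sigma\cap E_i),E_i/(\Sigma\cap E_i))$, hence an affine space, and in the isotropic cases one intersects this affine space with the isotropy condition. This is where I expect the main obstacle to lie: beyond the bookkeeping for $\epsilon$ across the three type-$B$/$D$ orbits and the parity phenomena in the maximal type-$D$ cases, the delicate point is to confirm that the isotropy constraint cuts out an \emph{affine} (hence linear) fiber rather than a quadratic one. The remark following Thm. \ref{theo:para.descr} shows that linearity genuinely fails for non-cominuscule $P$, so this cannot be taken for granted. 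Rather than recomputing transition functions, I would invoke the cominuscularity of $P$ through Thm. \ref{theo:para.descr}: it guarantees a priori that the $L$-equivariant affine-space fibration $\phi$ is a vector bundle, which then pins down $\OO_d\cong L\times_{R_{w_{min}}}V_{w_{min}}$ over the stated base and completes each case.
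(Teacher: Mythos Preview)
Your proposal is correct and follows essentially the same route as the paper: verify that the map is well-defined on each orbit and then describe the fiber over a point of the base as an affine space. The paper carries this out explicitly only in the representative cases \ref{it:A}, \ref{it:B.nonmax}, \ref{it:C}, and \ref{it:D.max.n} with $n$ even, writing each fiber as a set of subspaces $\Sigma_1\oplus\Sigma'$ with $\Sigma'$ subject to linear and (in the isotropic cases) quadratic constraints, and reading off its dimension directly; the remaining cases are declared ``very similar''.

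The one genuine difference is organizational. The paper does not invoke $L$-equivariance or Thm.~\ref{theo:para.descr} inside the proof at all: it simply exhibits each fiber as a copy of $\C^k$ and stops, deferring the identification with Mitchell's abstract fibration to a remark afterward (``tedious but straightforward calculations show that these fibrations are indeed the same''). You instead front-load the $L$-equivariance to identify the base with $L/R_{w_{min}}$ and then appeal to Thm.~\ref{theo:para.descr} to guarantee a priori that the affine fibration is a vector bundle. Your route is cleaner conceptually and sidesteps any worry about quadratic transition functions, at the small cost of having to check that the stabilizer in $L$ of a point of the explicit base really is $R_{w_{min}}$; the paper's route is more elementary but, as you correctly note, does not by itself establish linearity of the bundle structure.
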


\begin{proof}
 We only describe Cases \ref{it:A}, \ref{it:B.nonmax}, \ref{it:C} and \ref{it:D.max.n} with $n$ even. The other cases are very similar.

 \ref{it:A}. Since $\OO_d = \left\{ \Sigma \in X \mid \dim (\Sigma \cap E_i) = d \right\}$, the map is well defined. Moreover, the fiber at a pair $(\Sigma_1,\Sigma_2) \in \G(d,E_i) \times G(m-d,\C^{n+1} / E_i)$ is 
 \[
  \left\{ \Sigma_1 \oplus \Sigma' \mid \dim \Sigma' = m-d, \Sigma' = \Sigma_2 \text{ mod } E_i \right\} \cong \C^{\dim{\Sigma_2}\times\dim E_i} = \C^{(m-d)i}.
 \]

 \ref{it:B.nonmax}) For $d=0$, the fiber over $\Sigma_1 \in \OG(m-1,E_1^\perp/E_1)$ is
 \begin{align*}
  &\left\{ \Sigma' \oplus L \mid \Sigma' = \Sigma_1 \text{ mod } E_1, L \subset \Sigma_1^\perp \setminus E_1^\perp, L \text{ isotropic} \right\} \\ \cong & \C^{\dim \Sigma_1\times\dim E_1} \times \C^{\dim \Sigma_1^\perp -\dim \Sigma_1 -\dim L-1} = \C^{2n-m}.
 \end{align*}
 
 For $d=1$, the fiber over $\Sigma_1 \in \OG(m,E_1^\perp/E_1)$ is
 \[
  \left\{ \Sigma' \mid \Sigma' = \Sigma_1 \text{ mod } E_1 \right\} \cong \C^{\dim E_1 \dim \Sigma_1} = \C^m.
 \]

 Finally, for $d=2$, the map is an isomorphism.

 \ref{it:C}. The fiber over $(\Sigma_1 \subset \Sigma_2) \in \F(d,n-m+d;E_n)$ is
 \begin{align*}
  & \left\{ \Sigma_1 \oplus \Sigma' \mid \dim\Sigma' = m-d, \Sigma' = \Sigma_2^\perp \text{ mod } E_n, \Sigma' \subset \Sigma_1^\perp \text{ isotropic }\right\} \\
  \cong & \C^{\dim\Sigma'(\dim E_n-\dim\Sigma_1)-\frac{\dim\Sigma'(\dim{\Sigma'-1})}{2}}=\C^{(m-d)(n-d)-\frac{(m-d)(m-d-1)}{2}}.
 \end{align*}

 \ref{it:D.max.n}) We assume $n$ is even. The fiber over $\Sigma_1 \in \G(2d,E_n)$ is
 \begin{align*}
  & \left\{ \Sigma_1 \oplus \Sigma' \mid \dim \Sigma' = m-2d, \Sigma'=\Sigma_1^\perp \text{ mod } E_n \Sigma' \subset \Sigma_1^\perp, \Sigma' \text{ isotropic} \right\} \\
  \cong & \C^{\dim\Sigma'^2-\frac{\dim\Sigma'(\dim\Sigma'-1)}{2}}=\C^{(n-2d)^2-\frac{(n-2d)(n-2d-1)}{2}}. \qedhere
 \end{align*}
\end{proof}

\begin{rema}
 In Thm. \ref{theo:para.descr}, the fibrations for parabolic orbits are described combinatorially. Tedious but straightforward calculations show that these fibrations are indeed the same as those described in the above proposition.
\end{rema}

\subsubsection{Combinatorial description of parabolic orbits}
\label{subsub:comb.descr}

We begin by recalling the description of the elements of the Weyl group in type $A_n$ (respectively in types $B_n$, $C_n$ and $D_n$) as permutations (resp. signed permutations) of $\left\{ 1,\dots,n \right\}$. We do not have such a description in the exceptional cases.

In type $A$, the Weyl group is $W=\mathfrak{S}_n$, and we denote $w\in W$ as $w = (a_1, \dots, a_n)$ where $\left\{ 1,\dots,n \right\}=\left\{ a_1, \dots, a_n \right\}$, which means that $w(i)=a_i$.

In types $B_n$ and $C_n$, the Weyl group is $W=\mathfrak{S}_n\ltimes\Z_2^n$, and we denote $w\in W$ as $w = (b_1, \dots, b_n)$, where $b_i=a_i$ or $-a_i$ and $\left\{ 1,\dots,n \right\}=\left\{ a_1, \dots, a_n \right\}$, which means that $w(i)=a_i$ if $b_i=a_i$ and $w(i)=a_i$ if $b_i=-a_i$. 

Finally, in type $D_n$, the Weyl group is $W=\mathfrak{S}_n\ltimes\Z_2^{n-1}$, and we denote elements of $W$ as in the previous case, with the additional condition that the number of negative parts $-a_i$ should be even. 

We can now state a proposition describing, for all the classical types, the double coset $\EE_d\in W_P \backslash W / W_Q $ indexing the $P$-orbit $\OO_d$ defined in Prop. \ref{prop:geom.descr.1} :
\begin{prop}
 \begin{enumerate}
  \item If $X=\G(m,n+1)$ and $P=P_{\omega_i}$ for $1\leq i\leq n$, then
   \[
    \EE_d = \left\{ w \in W \mid \# \left\{ 1 \leq j \leq m \mid w(j) \leq i \right\} = m-d \right\}.
   \]
  \item \begin{enumerate}[a)]
         \item If $X=\OG(m,2n+1)$ with $m<n$ and $P=P_{\omega_1}$, then
	  \begin{align*}
	   \EE_0 &= \left\{ w \in W \mid \exists 1 \leq j \leq m, w(j)=-1 \right\} \\
	   \EE_1 &= \left\{ w \in W \mid \nexists 1 \leq j \leq m, w(j) \in \left\{ 1,-1 \right\} \right\} \\
	   \EE_2 &= \left\{ w \in W \mid \exists 1 \leq j \leq m, w(j)=1 \right\}.
	  \end{align*}
	 \item If $X=\OG(n,2n+1)$ and $P=P_{\omega_1}$, then
	  \begin{align*}
	   \EE_0 &= \left\{ w \in W \mid \exists 1 \leq j \leq m, w(j)=-1 \right\} \\
	   \EE_1 &= \left\{ w \in W \mid \exists 1 \leq j \leq m, w(j)=1 \right\}.
	  \end{align*}
        \end{enumerate}
  \item If $X=\IG(m,2n)$ and $P=P_{\omega_n}$, then
   \[
    \EE_d = \left\{ w \in W \mid \# \left\{ 1 \leq j \leq m \mid w(j) > 0 \right\} = d \right\}.
   \]
  \item \begin{enumerate}[a)]
         \item If $X=\OG(m,2n)$ with $m<n-1$ and $P=P_{\omega_1}$, then $\EE_d$ is defined as in case \ref{it:B.nonmax}.
         \item If $X=\OG(m,2n)$ with $m<n-1$ and $P=P_{\omega_{n-1}}$, then
	  \begin{align*}
	   \EE_d =& \left\{ w \in W \mid \# \left\{ j \leq m \mid w(j) > 0 \right\} = d, w(j) \neq n,-n \;\forall j\leq m \right\} \\
		 &\cup \left\{ w \mid \# \left\{ j \leq m \mid w(j) > 0 \right\} = d-1, \exists j \leq m, w(j) = -n \right\} \\
		 &\cup \left\{ w \mid \# \left\{ j \leq m \mid w(j) > 0 \right\} = d+1, \exists j \leq m, w(j) = n \right\}.
	  \end{align*}
	 \item If $X=\OG(m,2n)$ with $m<n-1$ and $P=P_{\omega_n}$, then $\EE_d$ is defined as in case \ref{it:C}.
	 \item If $X=\OG(n,2n) \cong \OG(n-1,2n-1)$ and $P=P_{\omega_1}$, then $\EE_d$ is defined as in case \ref{it:B.max}.
	 \item If $X=\OG(n,2n)$ and $P=P_{\omega_{n-1}}$, then
	  \begin{align*}
	   \EE_d = &\left\{ w \in W \mid \# \left\{ j \mid w(j) > 0 \right\} = 2d+\epsilon'-1 \text{ and } \exists j, w(j) = -n \right\} \\
		 &\cup \left\{ w \in W \mid \# \left\{ w(j) > 0 \right\} = 2d+\epsilon'+1 \text{ and } \exists j , w(j) = n \right\},
	  \end{align*}
	  where $\epsilon'=0$ if $n$ is odd and $1$ if $n$ is even.
	 \item If $X=\OG(n,2n)$ and $P=P_{\omega_n}$, then
	  \[
	   \EE_d = \left\{ w \in W \mid \# \left\{ j \mid w(j) > 0 \right\} = 2d + \epsilon \right\},
	  \]
	  where $\epsilon=1-\epsilon'$.
        \end{enumerate}
 \end{enumerate}
\end{prop}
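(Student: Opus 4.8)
The plan is to reduce everything to a computation at the torus-fixed points of $X$. By Item \ref{it:par.base} of Prop. \ref{prop:par.base}, each $P$-orbit is a union of Schubert cells $C_{w'} = Bw'Q/Q$, so the double coset $\EE_d$ indexing $\OO_d$ is determined by which cells $C_{w'}$ lie in $\OO_d$. Each cell contains a unique $T$-fixed point $w'Q/Q$, and since every subspace $E$ appearing in the defining conditions of Prop. \ref{prop:geom.descr.1} (namely $E_i$, $E_1^\perp$, $E_n$ and $E_n'$) is stabilised by $B$, the function $\Sigma \mapsto \dim(\Sigma \cap E)$ is constant on each $B$-orbit. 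Hence it suffices to evaluate the defining condition of $\OO_d$ at the single point $w'Q/Q$, and then check that the resulting condition on $w'$ is invariant under left multiplication by $W_P$ and right multiplication by $W_Q$, so that it describes a union of double cosets.

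I would first fix an explicit model for $X = G/Q$. In each classical type I realise $\C^N$ with its bilinear form in the standard coordinates, take the $B$-stable flag to be the coordinate flag, and write the base point of $X$ as the coordinate (isotropic) subspace $\langle e_1, \dots, e_m\rangle$. Writing $w$ as a (signed) permutation as in the conventions above, the $T$-fixed point $wQ/Q$ is then the coordinate subspace spanned by $e_{w(1)}, \dots, e_{w(m)}$, where $e_{-k}$ denotes the basis vector dual to $e_k$. Because both this subspace and the relevant $E$ are spanned by subsets of the basis, the intersection $\Sigma \cap E$ is spanned by the common basis vectors, so $\dim(\Sigma \cap E)$ is simply the number of indices $j \le m$ with $e_{w(j)} \in E$. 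This turns each geometric condition of Prop. \ref{prop:geom.descr.1} into a counting condition on the values $w(1), \dots, w(m)$: for instance in case \ref{it:A} one counts the $j \le m$ with $w(j)$ in the index range spanning $E_i$, and in type $C$ one counts the $j \le m$ with $w(j) > 0$, since $E_n$ is spanned by the positively-indexed vectors. Matching these counts with the defining conditions of Prop. \ref{prop:geom.descr.1} yields the stated descriptions of $\EE_d$ in types $A$, $B$ and $C$.

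The delicate point is type $D$, and in particular the role of the extremal index $n$. Here the two maximal isotropic subspaces $E_n = \langle e_1, \dots, e_n\rangle$ and $E_n' = \langle e_1, \dots, e_{n-1}, e_{-n}\rangle$ differ only in the last coordinate, so among the basis vectors $e_{w(j)}$ it is precisely the occurrence of the value $w(j) = n$ or $w(j) = -n$ that distinguishes membership in $E_n$ from membership in $E_n'$. Writing $P_+ = \#\{j \le m : w(j) > 0\}$, one finds that $\dim(\Sigma \cap E_n')$ equals $P_+$ corrected by $-1$ if some $w(j) = n$ and by $+1$ if some $w(j) = -n$; splitting according to these two possibilities reproduces the three-part union in case \ref{it:D.nonmax.n-1}. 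In the maximal cases $X = \OG(n,2n)$ the same count applies with $m = n$, but now exactly one of the values $\pm n$ occurs among $w(1), \dots, w(n)$; since $\Sigma$ and $E_n$ lie in the same family while $E_n'$ lies in the opposite one, the parity of $\dim(\Sigma \cap E_n')$ is forced to be that of $n-1$, which is exactly the constraint encoded by writing the intersection dimension as $2d + \epsilon'$, and the two sign possibilities for the value $\pm n$ give the two pieces of $\EE_d$ in case \ref{it:D.max.n-1}.

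The main obstacle is therefore not any single hard computation but the careful bookkeeping of conventions: one must pin down the correspondence between $w$ and the coordinate subspace $wQ/Q$ (including how the duality $E_{n+i} = E_{n-i}^\perp$ interacts with the signs of $w$), and verify in each case that the resulting counting condition is genuinely bi-invariant under $W_P$ and $W_Q$. This last verification is immediate once one observes that right multiplication by $W_Q$ only permutes the values $\{w(1), \dots, w(m)\}$ among themselves, while left multiplication by $W_P$ preserves each of the relevant index blocks, so neither operation changes any of the counts above. Combined with the bijection between orbits and double cosets from Prop. \ref{prop:par.base}, this shows that each counting condition singles out exactly the double coset $\EE_d$ corresponding to $\OO_d$.
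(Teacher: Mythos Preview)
Your approach is correct and genuinely different from the paper's. The paper argues purely inside the Weyl group: it writes down the minimal-length representatives of $W/W_Q$ as signed permutations, then finds the minimal representative $w_d$ of each double coset $W_P\backslash W/W_Q$, and finally shows by an explicit sequence of left multiplications by simple reflections in $W_P$ (and right multiplications by $W_Q$) that every element satisfying the stated counting condition can be reduced to $w_d$. No geometry enters; Prop.~\ref{prop:geom.descr.1} is not invoked. Your argument instead leverages Prop.~\ref{prop:geom.descr.1} directly: you evaluate the geometric incidence conditions at the $T$-fixed points and read off the counting conditions, then check bi-invariance of those counts. This is more conceptual and explains \emph{why} the answers take the form they do (they are literally the intersection dimensions at coordinate subspaces), whereas the paper's reduction, while self-contained, produces the minimal representatives $w_d$ as a useful by-product and makes the double-coset structure explicit. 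One caution: your bi-invariance check ``left multiplication by $W_P$ preserves each of the relevant index blocks'' is exactly the place where the type $D$ cases \ref{it:D.nonmax.n-1} and \ref{it:D.max.n-1} require care, since $W_P$ for $P=P_{\omega_{n-1}}$ contains the reflection $s_n$ which swaps the values $n$ and $-(n-1)$ and does \emph{not} preserve the sign count; this is precisely why those $\EE_d$ are unions of three (resp.\ two) pieces rather than a single count, and you should make the invariance of that union under $s_n$ explicit rather than leave it implicit.
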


\begin{proof}
 The arguments for each case being similar, we only prove the proposition in Case \ref{it:D.nonmax.n-1}, which is a little more complicated than the others. 

 Here the Weyl groups are $W=\mathfrak{S}_n \ltimes \Z_2^{n-1}$, $W_P=\mathfrak{S}_{n-1}\ltimes\Z_2$ and $W_Q=\mathfrak{S}_m\times(\mathfrak{S}_{n-m}\ltimes\Z_2^{n-m-1})$. We will denote elements of $W$ as signed permutations $w=(b_1,\dots,b_n)$ as in the beginning of the section. 

 The action of $W_Q$ on the right permutes the $m$ first entries $b_1,\dots,b_m$ of $w$ on one hand, and the $n-m$ last entries $b_{m+1},\dots,b_n$ on the other hand, and changes the sign of these last entries while keeping the total number of minus signs even. Hence the minimal length representatives of classes in $W/W_Q$ are of the form :
 \[
  w = \left( u_1<\dots<u_l,-z_{m-l}<\dots<-z_1,v_1<\dots<v_{n-m-1},(-1)^{m-l} v_{n-m} \right),
 \]
 where $0 \leq l \leq m$, $\{ u_i \} \cup \{ z_r \} \cup \{ v_j \} = \{1,\dots,n\}$ and $v_{n-m-1} < v_{n-m}$.

 Moreover, the action of $W_P$ on the right permutes the $n-1$ values $1, \dots, n-1$ and exchanges $n-1$ and $n$ while changing their signs. Hence the minimal length representatives of double cosets in $W_P \backslash W/W_Q$ are of the form :
 \[
  w_0 = id \text{ or } w_d = \left( 1 < \dots < d-1 < n , -n+1 < \dots < -n+m-d , \underline{v} \right),
 \]
 where $1\leq d \leq m$ and
 \[
  \underline{v} = \left(d < \dots < n-m+d-2 , (-1)^{m-d} (n-m+d-1)\right).
 \]
 Now it is enough to prove that all elements of the set $\EE_d$ defined in the statement of the proposition are in the same double coset as $w_d$.

 First suppose $w\in W$ is such that $\# \left\{ j \leq m \mid w(j) < 0 \right\} = d$ and $w(j) \neq n,-n$ for all $j\leq m$. Using the action of $W_Q$ on the right, we see that $w$ is in the same double coset as
 \[
  w^1 = \left( a_1 < \dots < a_d, -b_{m-d} < \dots < -b_1, c_1 < \dots < c_{n-m-1}, (-1)^{m-d} n \right).
 \]
 Using (several times) the action of the simple reflections $s_1,\dots,s_{n-1}$ of $W_P$ on the left (which together permute the values from $1$ to $n-1$), we deduce that $w^1$ is in the same double coset as
 \[
  w^2 = \left( 1 < \dots < d, -n+1 < \dots < -n+m-d, \underline{v} \right),
 \]
 where $\underline{v} = \left( d+1 < \dots < n-m+d-1, (-1)^{m-d} n \right)$.
 Then applying the simple reflection $s_n \in W_P$ on the left, we get
 \[
  w^3 = \left( 1 < \dots < d < n, -n+2 < \dots < -n+m-d, \underline{v} \right),
 \]
 where $\underline{v} = \left( d+1 < \dots < n-m+d-1, (-1)^{m-d+1} (n-1) \right)$.

 Finally, using the action of the simple reflections $s_1,\dots,s_{n-1}$ of $W_P$ on the left, we obtain the element $w^4=w_d$, which proves that $w$ is in the same double coset as $w_d$. 

 The reasoning in the two other situations ($\# \left\{ j \leq m \mid w(j) > 0 \right\} = d-1$ and $\exists j \leq m, w(j) = -n$ on one hand, $\# \left\{ j \leq m \mid w(j) > 0 \right\} = d+1$ and $\exists j \leq m, w(j) = n$ on the other hand) being very similar, this concludes the proof.
\end{proof}

\begin{defi}
\label{def:degree}
 Let $w\in W$ be an element of the Weyl group. Then $w$ belongs to one of the double cosets $\EE_d$ defined in the statement of the proposition and we define the integer $d(w):=d$.
\end{defi}

\section{Link between \texorpdfstring{$P$}{P}-orbits and quantum product}
\label{sec:link.para}

Here we describe the link between Thm. \ref{th:para.formula} and parabolic orbits for homogeneous spaces $X=G/Q$, where $Q$ is a maximal parabolic subgroup. Since $Q$ is maximal, we have $\Phi^\vee / \Phi^\vee_Q \cong \Z$. Hence for each $w \in W$, we may define an integer
\[
 \delta(w) := \eta_Q(\omega_i^\vee - w^{-1}(\omega_i^\vee)).
\]
In the following sections, we will prove that the loci where $\delta(w)$ is constant correspond to the double cosets $\EE$ indexing $P$-orbits. For classical Grassmannians, this proves that for every $w \in W ^Q$, $\delta(w)$ equals the integer $d(w)$ introduced in Definition \ref{def:degree}.

\subsection{The integer \texorpdfstring{$\delta(w)$}{delta(w)} is constant on parabolic orbits.}

We start by proving that $\delta$ is constant on the double cosets $\EE = W_P w W_Q$.
Consider $w' \in \EE$. From the definition of $\EE$, it follows that $w'$ can be written as $w_P w w_Q$ for some $w_P \in W_P$ and $w_Q\in W_Q$. Denote by $\omega_i$ the cominuscule weight defining $P$. Reflections associated to the simple roots will be denoted by $s_l$ for $1 \leq l \leq n$. 

If $l \neq i$, we have 
\[
 s_l(\omega_i^\vee)=\omega_i^\vee-(\alpha_l,\omega_i^\vee)\alpha_l^\vee=\omega_i^\vee,
\]
hence
\begin{equation}
\label{eq:P-orb.1}
 w_P^{-1}(\omega_i^\vee)=\omega_i^\vee.
\end{equation}
Now consider $e:= \eta_Q(w^{-1}(\omega_i^\vee))$. Then by definition of $\eta_Q$,
\[
 w^{-1}(\omega_i^\vee)=e\alpha_m^\vee+\sum_{p\neq m}c_p\alpha_p^\vee,
\]
where the $c_p$ are some coefficients. But if $l\neq m$, we have
\[
 s_l(\alpha_m^\vee)=\alpha_j^\vee-(\alpha_l,\alpha_m^\vee)\alpha_l^\vee.
\]
Similarly, for $p\neq m$ and $l\neq p,m$ :
\[
 s_l(\alpha_p^\vee)=\alpha_p^\vee-(\alpha_l,\alpha_p^\vee)\alpha_l^\vee,
\]
and if $p\neq m$ and $l=p$ :
\[
 s_p(\alpha_p^\vee)=-\alpha^\vee_p.
\]

Hence if we apply the reflection $s_l$ for $l\neq m$, the coefficient of $\alpha_m^\vee$ does not change.
We conclude that $\eta_Q\left(w_Q^{-1}w^{-1}\omega_i^\vee\right)=\eta_Q\left(w^{-1}\omega_i^\vee\right)$. Using Equation \eqref{eq:P-orb.1}, we obtain
\[
 \eta_Q(w_Q^{-1}w^{-1}w_P^{-1}\omega_i^\vee)=\eta_Q(w^{-1}\omega_i^\vee).
\]

\subsection{The integer \texorpdfstring{$\delta(w)$}{delta(w)} changes on different parabolic orbits.}

It is enough to prove that if $w' \in \EE' \cap W^Q$ is a successor of $w \in \EE \cap W^Q$ for the Bruhat order in $W^Q$, where $\EE$ and $\EE'$ are two different $P$-orbits, then $\delta(w') > \delta(w)$.

Since $w$ and $w$ and $w'$ do not belong to the same $P$-orbit, we know that $w'=s_{\alpha_0} w$ for some positive root $\alpha_0\in \Phi^+\setminus \left( \Phi_P^+ \cap \Phi_Q^+ \right\}$. Indeed, if $\alpha\in \Phi_P^+$, then the reflection $s_\alpha$ is in $W_P$, hence stabilizes $\EE$ and if $\alpha \in \Phi_Q^+$, then $w' = w$ in $W/W_Q$. Moreover, we have $l_Q(w')=l_Q(w)+1$, where $l_Q$ is the length function of $W^Q$.

We set $L_Q(w) := \left\{ \alpha \in \Phi^+ \setminus \Phi^+_Q \mid w(\alpha) \in \Phi^- \right\}$.
There exists $\beta_0\in \Phi^+ \setminus \Phi^+_Q$ such that $w(\beta_0)=\alpha_0$. Indeed, if it were not the case, then for all $\alpha\in L_Q(w')$, we would have $s_{\alpha_0}w(\alpha)\in \Phi^-$ and $w(\alpha)\neq \alpha_0$, hence $w(\alpha)\in \Phi^-$ and $\alpha\in L_Q(w)$. This would mean that $l_Q(w')\leq l_Q(w)$, which is absurd. 

Let us now compute $\delta(w')$ :
\[
 \delta(w')=\eta_Q\left(\omega_i^\vee-w^{-1}s_{\alpha_0}(\omega_i^\vee)\right)=\delta(w)+(\alpha_0,\omega_i^\vee)\eta_Q\left(w^{-1}\alpha^\vee_0\right).
\]
Since $\alpha_0\in \Phi^+\setminus \Phi_P^+$, we have $(\alpha_0,\omega_i^\vee)>0$. Moreover, $w(\beta_0)=\alpha_0$ implies $w^{-1}(\alpha_0^\vee)=\beta_0^\vee$, and $\eta_Q(\beta_0)>0$ since $\beta_0\in \Phi^+\setminus \Phi_Q^+$. Finally $\delta(w')>\delta(w)$ as required. 

We conclude that the loci
\[
 \left\{ w \in W^Q \mid \delta(w) = d \right\}
\]
coincide with the sets $\EE \cap W^Q$.

\section{Decomposition of the Hasse diagram}
\label{sec:Hasse}

In \cite{CMP2}, Chaput, Manivel and Perrin relate the quantum product by the point class in minuscule varieties with a decomposition of their Hasse diagram. The \emph{Hasse diagram $\HH$} of a homogeneous space with Picard rank one is the diagram of the multiplication by the hyperplane class $h$. More precisely, its vertices are the Schubert classes $\sigma_w$ for $w\in W^Q$ and $\sigma_v$ and $\sigma_w$ are related by an arrow of multiplicity $r$ if and only if $\sigma_w$ appears with multiplicity $r$ in the cup-product $\sigma_v \cup h$.

The results of previous sections enable us to find decompositions of the Hasse diagram in the non-minuscule case, corresponding to the quantum product by the Schubert classes $\sigma_{v_i}$ associated to cominuscule weights introduced in the statement of Thm. \ref{th:para.formula}. 

Let $\OO$ be a $P$-orbit of $X$. It is the union of the Schubert cells $C_w\subset X$ for all $w$ in the associated double coset $\EE$.  The set $\EE\cap W^Q$ being an interval (cf Prop. \ref{prop:interval}), we denote it as $\EE\cap W^Q=\left[w_{min},w_{max}\right]$. From Thm. \ref{th:para.formula}, we know that $\OO$ is a vector bundle over the generalized flag variety $F:=L/R_{w_{min}}$.

Here we state a result relating the Hasse diagrams of the parabolic orbit $\OO$ with a similar diagram for the flag variety $F$ :
\begin{prop}
\label{prop:link.Hasse}
 Let $\psi : \OO \ra F$ be the fibration, $i : \OO \hookrightarrow X$ the natural embedding and $h$ the hyperplane class of $X$. Then :
 \begin{enumerate}
  \item There exists a class $h'\in \H^2(F)$ such that $i^* h =\psi^* h'$ ;
  \item The Hasse diagram of $\OO$ is isomorphic to the diagram of the multiplication by $h'$ in $F$.
 \end{enumerate}
\end{prop}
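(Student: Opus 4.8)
The plan is to turn the vector bundle $\psi$ into an isomorphism of cohomology rings and then to check that this isomorphism respects the two Schubert stratifications. For the first item I would use that a vector bundle is a deformation retract onto its base, so that $\psi^\ast\colon \H^\ast(F)\to\H^\ast(\OO)$ is an isomorphism of graded rings --- this is precisely the ring isomorphism already recorded after Thm.~\ref{theo:para.descr}. In particular $\psi^\ast\colon\H^2(F)\to\H^2(\OO)$ is bijective, and it suffices to set $h':=(\psi^\ast)^{-1}(i^\ast h)$, which gives $i^\ast h=\psi^\ast h'$ and proves the first item.

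For the second item, the first step is to read off the multiplication by $i^\ast h$ on $\H^\ast(\OO)$ by restriction from $X$. Write $\EE\cap W^Q=[w_{min},w_{max}]$, so that $\OO$ has the affine paving $\{C_w\}_{w\in[w_{min},w_{max}]}$ and $\overline{\OO}=X_{w_{max}}$. The classes $\tau_w:=i^\ast\sigma_w$ for $w\in[w_{min},w_{max}]$ then form the Schubert basis of $\H^\ast(\OO)$, while $i^\ast\sigma_w=0$ whenever $w\not\leq w_{max}$, since $\sigma_w$ already restricts to $0$ on $\overline{\OO}=X_{w_{max}}$. Now apply $i^\ast$ to a Hasse relation $\sigma_v\cup h=\sum_w c_w\sigma_w$ of $X$ with $v\in[w_{min},w_{max}]$: since the arrows of the Hasse diagram raise the length by one, every $w$ with $c_w\neq0$ satisfies $w\geq v\geq w_{min}$, so the only surviving terms are those with $w\leq w_{max}$, which therefore all lie in $[w_{min},w_{max}]$. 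Hence $\tau_v\cdot i^\ast h=\sum_{w\in[w_{min},w_{max}]}c_w\,\tau_w$, which shows that the multiplication by $i^\ast h$ on $\H^\ast(\OO)$ reproduces exactly the part of the Hasse diagram of $X$ with both endpoints in $[w_{min},w_{max}]$; by definition this is the Hasse diagram of $\OO$.

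It remains to transport this to $F$. As $\psi^\ast$ is a ring isomorphism with $\psi^\ast h'=i^\ast h$, the multiplication by $i^\ast h$ on $\H^\ast(\OO)$ is conjugate through $\psi^\ast$ to the multiplication by $h'$ on $\H^\ast(F)$; the two diagrams therefore coincide as soon as $\psi^\ast$ carries the Schubert basis of $F$ onto $\{\tau_w\}$. This compatibility is the crux, and the step I expect to be the main obstacle: one must show that each Schubert cell $C_w$ of $\OO$ is the $\psi$-preimage of a Schubert cell of $F$ (the degrees then match, since $\psi^\ast$ preserves cohomological degree and the fibre codimension cancels). I would check this directly from the explicit fibrations of Prop.~\ref{prop:fibr}, where $\psi$ sends $\Sigma$ to $\Sigma\cap E_i$ or to the induced isotropic flag: the relative position of $\Sigma$ with respect to the fixed complete flag determines, and is determined up to the affine fibre by, the relative position of $\psi(\Sigma)$, so that $C_w=\psi^{-1}(\text{Schubert cell})$. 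Equivalently, one observes that $\psi$ is equivariant for $B=(B\cap L)\ltimes U$ acting on $F=L/R_{w_{min}}$ through $B\cap L$, whence each preimage of a Schubert cell is a single $B$-stable affine cell and thus one of the $C_w$. Once this identification is in place, $\psi^\ast$ matches the Schubert classes and the structure constants agree, so the two diagrams are isomorphic.
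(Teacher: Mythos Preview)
Your argument for item~1 is fine and coincides with the paper's. The problem is in item~2, specifically the claim that the classes $\tau_w:=i^\ast\sigma_w$ for $w\in[w_{min},w_{max}]$ form the Schubert basis of $\H^\ast(\OO)$. This fails for degree reasons. Since $\psi^\ast$ is a graded isomorphism, $\H^\ast(\OO)\cong\H^\ast(F)$ is concentrated in degrees $0,\dots,2\dim F=2\bigl(l(w_{max})-l(w_{min})\bigr)$; on the other hand the restricted classes $i^\ast\sigma_w$ keep their degree $\deg\sigma_w$, and these run over an interval of the same length but shifted away from~$0$ as soon as $\OO$ is neither the open nor the closed orbit. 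For a concrete failure take $X=\G(2,4)$, $P=P_{\omega_2}$ and the middle orbit $\OO_1=\{\Sigma\mid\dim(\Sigma\cap E_2)=1\}$: here $F\cong\P^1\times\P^1$, so $\H^\ast(\OO_1)$ sits in degrees $0,2,2,4$, while the four elements of $[w_{min},w_{max}]$ have lengths $1,2,2,3$, so under either normalization of $\sigma_w$ the classes $\tau_w$ sit in degrees $2,4,4,6$; none lands in $\H^0(\OO_1)$ and some are forced to vanish. The underlying point is that the cohomology class on $\OO$ attached to the cell $C_w$ has degree $2\bigl(l(w_{max})-l(w)\bigr)$ (its codimension \emph{in $\OO$}), not $\deg\sigma_w$, so $i^\ast$ does not realise the cell-to-class correspondence you need. (A related symptom: with the paper's convention $\sigma_w$ is Poincar\'e dual to $[X_w]$, so in $\sigma_v\cup h=\sum c_w\sigma_w$ one has $l(w)=l(v)-1$, hence $w<v$, not $w\geq v$.)

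The paper avoids this by running the comparison in the opposite direction and at the level of cycles rather than via $i^\ast$. Starting from a relation $[Y_w]\cup h'=\sum_v a_v[Y_v]$ in $F$, it pulls the \emph{cycles} back through the vector bundle $\psi$ to obtain $X_{\phi(w)}\cap\OO$ and $X_{\phi(v)}\cap\OO$ in $\OO$, then takes closures in $\overline{\OO}\subset X$; this yields $\sigma_{\phi(w)}\cup h=\sum_v a_v\,\sigma_{\phi(v)}+(\text{terms supported on }\overline{\OO}\setminus\OO)$ directly in $\H^\ast(X)$. Your $B$-equivariance argument identifying $\psi^{-1}(\text{Schubert cell of }F)$ with a Schubert cell $C_{\phi(u)}$ of $X$ is exactly the bijection $\phi$ the paper uses; what must change is the passage between the two multiplication diagrams, which should go through cycles and closures rather than through cohomological restriction.
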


\begin{proof}
 \begin{enumerate}
  \item Since $i^* h\in\H^2(\OO)\cong\H^2(F)$, there exists $h'\in\H^2(F)$ such that $i^* h  =\psi^* h'$.
  \item There exists an isomorphism $W^F \cong \EE\cap W^Q$, where $W^F$ is the set of minimal length representatives of $W_L/W_{R_{w_{min}}}$. 
   Indeed, let $C_u^F$ be a Schubert cell of $F$. Since $\psi$ is a vector bundle, its inverse image $\psi^{-1}(C_u^F)$ is a Schubert cell of $X$, which we denote by $C_{\phi(u)}^X$, where $\phi(u) \in W^Q$. Since $C_{\phi(u)}^X \subset \OO$, we have $\phi(u) \in \EE \cap W^Q$, and $\phi$ is the desired isomorphism. It yields a correspondence between the vertices of the Hasse diagram of $\OO$ and those of the diagram of the multiplication by the class $h'$ in $F$. 
 
  Now we study the correspondence between the edges of both diagrams. Assume that 
  \[
   [Y_w] \cup h' = \sum_v a_v [Y_v],
  \]
  where $Y_v$ denotes the Schubert variety of $F$ associated to the element $v$. This means that a generic hyperplane section of $Y_w$ is rationally equivalent to the union of the $Y_v$ with multiplicities $a_v$. Let $Y_u$ be a Schubert variety of $F$. Its inverse image $\psi^{-1}(Y_u)$ is the closure in $\OO$ of the Schubert cell $C_{\phi(u)}^X$, hence it is the intersection of $\OO$ with the Schubert variety $X_{\phi(u)}$.
  
  Thus $X_{\phi(w)}\cap \OO$ is rationally equivalent to the union of the $X_{\phi(v)}\cap \OO$ with multiplicities $a_v$. As a consequence, if $H$ is a generic hyperplane, a section $X_{\phi(w)}\cap\OO\cap H$ is rationally equivalent to the union of the $X_{\phi(v)}\cap\OO\cap H$ with multiplicities $a_v$. If we consider the closure in $\overline{\OO}$, we deduce that $X_{\phi(w)}\cap H$ is rationally equivalent to the sum of the $X_{\phi(v)}$ with multiplicities $a_v$, plus a class $Z$ supported in the boundary $\overline{\OO}\setminus \OO$. But such a class is rationally equivalent to the union of some Schubert varieties $X_u$ contained in $\overline{\OO}\setminus \OO$, with some multiplicities $b_u$. This rational equivalence stays true in the whole of $X=G/P_J$. Taking cohomology classes, it means that 
  \[
   \sigma_{\phi(w)} \cup h = \sum_v a_v \sigma_{\phi(v)} + \sum_u b_u \sigma_u. 
  \]
  Since the Schubert varieties $X_u$ are contained in $\overline{\OO} \setminus \OO$, the elements $u \in W^Q$ are not contained $\EE \cap W^Q$. Hence they do not contribute to the arrows of the Hasse diagram of $\OO$. This proves that the Hasse diagram of $\OO$ has the same arrows as the diagram of the multiplication by the class $h'$ in $F$. \qedhere
 \end{enumerate}
\end{proof}

We may now conclude by combining the previous results to describe the Hasse diagrams of the classical Grassmannians :

\begin{theo}
\label{theo:decomp}
\begin{enumerate}
 \item In types $A_n$, $C_n$, $D_n$, and in type $B_n$ for odd orthogonal Grassmannians $\OG(m,2n+1)$ with $m\neq n-1$, if $\OO$ is a parabolic orbit associated to a cominuscule weight $\omega_i$, the Hasse diagrams $\HH_\OO$ and $\HH_F$ of $\OO$ and the corresponding flag variety $F$ described in Prop. \ref{prop:geom.descr.1} are isomorphic.
 \item In type $B_n$ for the odd orthogonal Grassmannian $\OG(n-1,2n+1)$, if we denote by $\OO_0$, $\OO_1$ and $\OO_2$ the parabolic orbits associated to the weight $\omega_1$ and $F_0$, $F_1$ and $F_2$ the corresponding flag varieties, we have $\HH_{\OO_0} \cong \HH_{F_0}$ and $\HH_{\OO_2} \cong \HH_{F_2}$, but $\HH_{\OO_1}$ corresponds to $\HH_{F_1}$ with the multiplicities of the arrows doubled.
\end{enumerate}
\end{theo}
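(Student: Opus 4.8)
The plan is to combine Prop. \ref{prop:link.Hasse} with an explicit identification of the class $h'$ for each of the fibrations listed in Prop. \ref{prop:fibr}. By Prop. \ref{prop:link.Hasse}, the Hasse diagram $\HH_\OO$ is isomorphic to the diagram of multiplication by the class $h'\in\H^2(F)$ characterised by $\psi^* h' = i^* h$. Since in each case the base $F$ carries a natural hyperplane class $h_F$ (the ample generator when $F$ has Picard rank one, or the Segre class $h_{F_1}+h_{F_2}$ for the type $A$ products), and $\HH_F$ is by definition the diagram of multiplication by $h_F$, the whole statement reduces to determining the positive integer $c$ such that $h' = c\, h_F$: one gets $\HH_\OO \cong \HH_F$ exactly when $c=1$, while $c=2$ produces $\HH_F$ with every multiplicity doubled.

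To compute $c$, I would use the descriptions of $\psi$ in Prop. \ref{prop:fibr}. In every case the fibre coordinates are built from the intersection $\Sigma\cap E$ and the projection $\Sigma/(\Sigma\cap E)$, so the top exterior power $\wedge^{\dim\Sigma}\Sigma$ factors through the corresponding exterior powers on $F$. This yields the compatibility
\[
 i^*(\mathrm{Pl}_X) = \psi^*(\mathrm{Pl}_F),
\]
where $\mathrm{Pl}_X$ and $\mathrm{Pl}_F$ are the Plücker classes coming from the embeddings of $X$ and $F$ into ordinary Grassmannians (for type $A$, $\mathrm{Pl}_F$ being the sum of the two Plücker classes). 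The integer $c$ is then read off from how $\mathrm{Pl}_X$ and $\mathrm{Pl}_F$ compare with the hyperplane classes $h$ and $h_F$.

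The decisive point is the normalisation of the ample generator on orthogonal Grassmannians: for ordinary Grassmannians, their products, isotropic two-step flag varieties, and \emph{non-maximal} orthogonal Grassmannians the Plücker class \emph{is} the ample generator, whereas for a \emph{maximal} orthogonal Grassmannian (spinor variety) it equals \emph{twice} the ample generator. I would therefore run through Prop. \ref{prop:fibr} and record, for each orbit, whether $X$ and $F$ are of spinor type. A short check shows that the factors of $2$ coming from $X$ and from $F$ always cancel—so $c=1$—except when $X$ is non-maximal but $F$ is a spinor variety. Among all the fibrations this mismatch occurs exactly once: for the orbit $\OO_1$ in $X=\OG(n-1,2n+1)$, whose base is $F=\OG(n-1,2n-1)$, a maximal orthogonal Grassmannian. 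There $\mathrm{Pl}_X = h$ but $\mathrm{Pl}_F = 2h_F$, so $h' = 2h_F$ and the multiplicities double, whereas for $\OO_0$ and $\OO_2$ the base is $\OG(n-2,2n-1)$, which is non-maximal, giving $c=1$.

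The main obstacle is the bookkeeping of this last paragraph: one must verify the Plücker-compatibility $i^*(\mathrm{Pl}_X)=\psi^*(\mathrm{Pl}_F)$ concretely for each family in Prop. \ref{prop:fibr}, and correctly identify which of the Grassmannians $\OG(k,N)$ occurring as $X$ or as $F$ are maximal isotropic. The only genuinely delicate cases are the two where $X$ is itself a spinor variety—$\OG(n,2n+1)$ with $P_{\omega_1}$ and $\OG(n,2n)$ with $P_{\omega_1}$—where both $X$ and the base $F$ are of spinor type, and one must check that the two factors of $2$ indeed cancel, leaving $c=1$ and hence $\HH_\OO\cong\HH_F$ as asserted in part 1.
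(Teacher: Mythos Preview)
Your overall strategy coincides with the paper's: reduce via Prop.~\ref{prop:link.Hasse} to identifying $h'$, do this by comparing the determinant of the tautological bundle on $X$ with the natural line bundles on $F$ through the short exact sequences implicit in Prop.~\ref{prop:fibr}, and then track the discrepancy between the Pl\"ucker class and the ample generator on spinor varieties. That is exactly what the paper does, case by case.

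There is, however, a genuine gap. Your blanket claim $i^*(\mathrm{Pl}_X)=\psi^*(\mathrm{Pl}_F)$ is not valid in every case, and your list of ``delicate'' spinor situations is incomplete. Consider $X=\OG(n,2n)$ with $P=P_{\omega_n}$ (or $P_{\omega_{n-1}}$): here $X$ is a spinor variety but the base $F=\G(2d+\epsilon,E_n)$ is an \emph{ordinary} Grassmannian, so in your bookkeeping $c_X=2$, $c_F=1$, and your recipe would output $c=1/2$. What actually happens is that the quadratic form induces a perfect pairing $\Sigma/(\Sigma\cap E_n)\times E_n/(\Sigma\cap E_n)\to\C$; since $\det E_n$ is trivial this forces $\det\Sigma\cong\bigl(\det(\Sigma\cap E_n)\bigr)^{\otimes 2}$, i.e.\ $i^*(\mathrm{Pl}_X)=2\,\psi^*(\mathrm{Pl}_F)$. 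It is this extra factor of~$2$, coming from the duality and not from any spinor normalisation of $F$, that cancels the $2$ on the $X$ side and gives $h'=h_F$. The paper handles this by writing the two short exact sequences and the duality out explicitly rather than assuming a uniform Pl\"ucker compatibility. Your plan therefore needs one more ingredient: in the maximal orthogonal cases with $P=P_{\omega_n}$ or $P_{\omega_{n-1}}$, the determinant comparison itself already produces a factor of~$2$, and only after combining it with the spinor normalisation of $X$ does one arrive at $c=1$.
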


\begin{proof}
 Since we want to apply Prop. \ref{prop:link.Hasse}, it is enough to compute the class $h' \in \H^2(F)$ introduced in the statement of this proposition. We use the same notations.

 In type $A_n$, denote by $\SS$ the tautological bundle on $X$ and $\SS_1,\SS_2$ the tautological bundles on $F$. We need to prove that $i^*(\det\SS)=\psi^*(\det\SS_1\otimes\det\SS_2)$, which is simply the consequence of the exact sequence
 \[
  0 \ra \psi^*\SS_1 \ra i^* \SS \ra \psi^*\SS_2 \ra 0
 \]
 since $h=c_1(\det\SS)$ and $h'=c_1(\det\SS_1\otimes\det\SS_2)$.

 In type $B_n$ for $X=\OG(m,2n+1)$ with $m<n$, we will prove for each of the three $P$-orbits $\OO_d$ for $d=0,1,2$ that $i^*(\det\SS)=\psi^*(\det\SS_1)$, where $\SS_1$ is the tautological bundle on $F$. Indeed, for $d=0$, we have the exact sequences
 \begin{align*}
  0 &\ra \Sigma \cap E_1^\perp \ra \Sigma \ra \Sigma / (\Sigma \cap E_1^\perp) \ra 0 \\
  0 &\ra \Sigma' \ra E_1^\perp / E_1 \ra E_1^\perp / (\Sigma\cap E_1^\perp \oplus E_1) \ra 0 \\
  0 &\ra \Sigma \cap E_1^\perp \ra \Sigma\cap E_1^\perp \oplus E_1 \ra (\Sigma\cap E_1^\perp \oplus E_1) / (\Sigma \cap E_1^\perp) \ra 0,
 \end{align*}
 which give the following equalities of determinant bundles
 \begin{align*}
  \det(\Sigma) &= \det(\Sigma\cap E_1^\perp)\otimes\det(\Sigma / (\Sigma \cap E_1^\perp)) \\
  \det(\Sigma') &= \det(\Sigma\cap E_1^\perp \oplus E_1) \\
  \det(\Sigma\cap E_1^\perp \oplus E_1) &= \det(\Sigma\cap E_1^\perp)\otimes\det((\Sigma\cap E_1^\perp \oplus E_1) / (\Sigma \cap E_1^\perp)).
 \end{align*}
 We conclude by using the fact that the quadratic form induces a duality
 \[
  \Sigma / (\Sigma\cap E_1^\perp) \times (\Sigma\cap E_1^\perp \oplus E_1) / (\Sigma \cap E_1^\perp) \ra \C.
 \]
 For $d=1$, we use the same method, only replacing $\Sigma\cap E_1^\perp$ with $\Sigma$, and for $d=2$, the result follows from the exact sequence
 \[
  0 \ra E_1 \ra \Sigma \ra \Sigma / E_1 \ra 0.
 \]
 Now we have proved that $i^*(\det\SS)=\psi^*(\det\SS_1)$, it remains to relate their first Chern classes with the classes $h$ and $h'$ defined in the statement of Prop. \ref{prop:link.Hasse}. We always have $c_1(\det\SS)=h$, but there are two cases for $c_1(\det\SS_1)$ :
 \begin{align*}
  c_1(\det\SS_1) = \begin{cases}
                   h' &\text{if $m<n-1$} \\
                   2h' &\text{if $m=n-1$.}
                  \end{cases}
 \end{align*}
 Indeed, $\OG(n-1,2n-1)$ is projectively isomorphic to $\OG(n-1,2n-2)$, which is embedded in $\P(V_{\omega_{n-1}})$, where $V_{\omega_{n-1}}$ is the half-spin representation. Hence the hyperplane class $h'$ is equal to the first Chern class of the line bundle associated to the weight $\omega_{n-1}$, while $\det\SS_1$ is the line bundle associated to the weight $2\omega_{n-1}$.

 In type $B_n$ for $X=\OG(n,2n+1)$, we prove as in the non-maximal case that $i^*(\det\SS)=\psi^*(\det\SS_1)$, $c_1(\det\SS)=h$ and $c_1(\det\SS_1)=h'.$

 In type $C_n$, denote by $\SS$ the tautological bundle on $X$ and $\SS_1,\SS_2$ the tautological bundles on $F$. Since $h=c_1(\det\SS)$ and $h'=c_1(\det\SS_1\otimes\det\SS_2)$, we need to prove that $i^*(\det\SS)=\psi^*(\det\SS_1\otimes\det\SS_2)$, which is simply the consequence of the exact sequences
 \[
  0 \ra \Sigma\cap E_n \ra \Sigma \ra \Sigma/(\Sigma\cap E_n) \ra 0
 \]
 \[
  0 \ra \Sigma^\perp\cap E_n \ra E_n \ra E_n/(\Sigma^\perp\cap E_n) \ra 0.
 \]

 In type $D_n$ for $X=\OG(m,2n)$ with $m<n$ or for $X=\OG(n,2n)$ with $P=P_{\omega_1}$, the result is proven in an analogous way as in types $B_n$ and $C_n$. This leaves us with the case where $X=\OG(n,2n)$ and $P=P_{\omega_n}$ or $P_{\omega_{n-1}}$. Here we treat the case $P=P_{\omega_n}$, the other being very similar. We use the two exact sequences
 \[
  0 \ra \Sigma\cap E_n \ra \Sigma \ra \Sigma / (\Sigma \cap E_n) \ra 0
 \]
 \[
  0 \ra \Sigma\cap E_n \ra E_n \ra E_n / (\Sigma \cap E_n) \ra 0
 \]
 and the duality $\Sigma / (\Sigma \cap E_n) \times E_n / (\Sigma \cap E_n) \ra \C$ to prove that $i^*(\det\SS)=\psi^*(\det\SS_1)$, with notations as before. Then we use the fact that $h=c_1(\det\SS)$ and $h'=c_1(\det\SS_1)$. 
\end{proof}

Finally, we give some pictures illustrating Thm. \ref{theo:decomp}. We start with a type $C_n$ example : the symplectic Grassmannian $\IG(2,8)$ in Figure \ref{IG(2,8)}. There are three orbits, two being vector bundles over the Grassmannian $\G(2,4)$ and another over the two-step flag variety $\F(1,3;4)$.
\begin{figure}[ht]
 \centering
 \caption{$P_{\omega_4}$-orbits in $\IG(2,8)$}
 \begin{tikzpicture}[scale=2/3]
 
\tikzstyle{every node}=[draw,circle,minimum size=6pt,inner sep=0pt]

\draw[blue,thick] (0,0) node[fill=blue] (1) {}
	-- (1,0) node[fill=blue] (2) {}
	-- (2,1) node[fill=blue] (3) {}
	-- (3,-1) node[fill=blue] (5) {}
	-- (4,-2) node[fill=blue] (6) {}
		  (2)
	-- (2,-1) node[fill=blue] (4) {}
	-- (5);
\node[draw=none,text=blue,thick] at (0.8,1.1) {$\mathrm{G}(2,4)$};

\draw[red,thick] (3,1) node[fill=red] (7) {}
	-- (4,2) node[fill=red] (8) {}
	-- (5,2) node[fill=red] (10) {}
	-- (6,0) node[fill=red] (14) {}
	-- (7,2) node[fill=red] (16) {}
	-- (8,1) node[fill=red] (18) {}
	-- (7,0) node[fill=red] (17) {}
	-- (14)
	-- (5,-2) node[fill=red] (12) {}
	-- (4,0) node[fill=red] (9) {}
	-- (7)
		 (8)
	-- (5,0) node[fill=red] (11) {}
	-- (6,2) node[fill=red] (13) {}
	-- (16)
		 (9)
	-- (11)
	-- (6,-2) node[fill=red] (15) {}
	-- (17);
\draw[red,thick,double] (10) -- (13)
			(11) -- (14)
			(12) -- (15);
\node[draw=none,text=red,thick] at (5.5,3) {$\mathrm{F}(1,3;4)$};

\draw[green,thick] (7,-2) node[fill=green] (19) {}
	-- (8,-1) node[fill=green] (20) {}
	-- (9,1) node[fill=green] (21) {}
	-- (10,0) node[fill=green] (23) {}
	-- (11,0) node[fill=green] (24) {}
		   (20)
	-- (9,-1) node[fill=green] (22) {}
	-- (23);
\node[draw=none,text=green,thick] at (10.2,1.1) {$\mathrm{G}(2,4)$};

\draw (3) -- (7)
      (5) -- (9)
      (6) -- (12)
      (15) -- (19)
      (17) -- (20)
      (18) -- (21);

\end{tikzpicture}
 \label{IG(2,8)}
\end{figure}
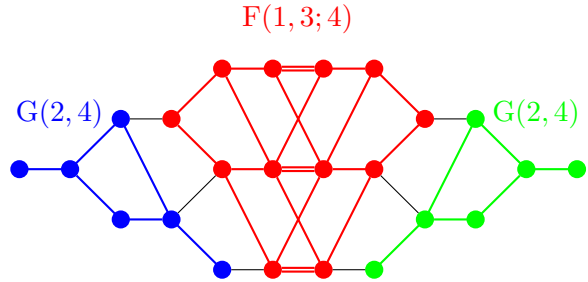

Then we consider a type $B_n$ example : the odd orthogonal Grassmannian $\OG(3,9)$ in Figure \ref{OG(3,9)}. There are again three orbits. The first and last are vector bundles over $\OG(2,7)$. For the middle orbit, which is a vector bundle over $\OG(3,7)$, we see as expected that the multiplicity of all arrows is multiplied by $2$.
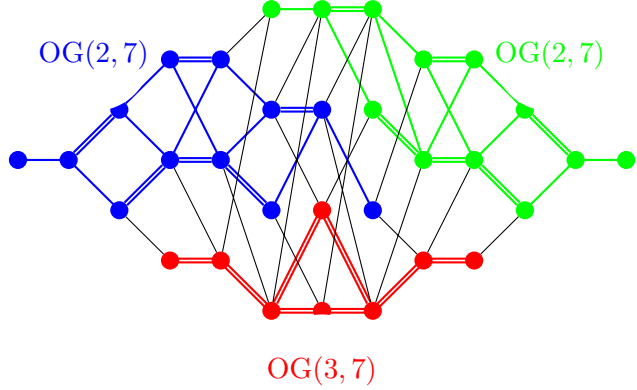
\begin{figure}
 \centering
 \caption{$P_{\omega_{1}}$-orbits in $\OG(3,9)$}
 \begin{tikzpicture}[scale=2/3]
 
\tikzstyle{every node}=[draw,circle,fill=white,minimum size=6pt,inner sep=0pt]

\draw[blue,thick] (0,0) node[fill=blue] (1) {}
        -- (1,0) node[fill=blue] (2) {}
        -- (2,-1) node[fill=blue] (4) {}
		  (2,1) node[fill=blue] (3) {}
	-- (3,0) node[fill=blue] (6) {}
	-- (4,2) node[fill=blue] (7) {}
	-- (5,1) node[fill=blue] (9) {}
	-- (4,0) node[fill=blue] (8) {}
	-- (3,2) node[fill=blue] (5) {}
	-- (3)
		  (5,-1) node[fill=blue] (10) {}
	-- (6,1) node[fill=blue] (11) {}
	-- (7,-1) node[fill=blue] (12) {};
\draw[blue,thick,double] (2) -- (3)
			 (4) -- (6)
			 (5) -- (7)
			 (6) -- (8)
			 (8) -- (10)
			 (9) -- (11);
\node[draw=none,text=blue,thick] at (1.5,2.1) {$\mathrm{OG}(2,7)$};

\draw[red,double,thick] (3,-2) node[fill=red] (13) {}
	-- (4,-2) node[fill=red] (14) {}
	-- (5,-3) node[fill=red] (15) {}
	-- (6,-3) node[fill=red] (17) {}
	-- (7,-3) node[fill=red] (18) {}
	-- (8,-2) node[fill=red] (19) {}
	-- (9,-2) node[fill=red] (20) {}
		  (15)
	-- (6,-1) node[fill=red] (16) {}
	-- (18);
\node[draw=none,text=red,thick] at (6,-4.2) {$\mathrm{OG}(3,7)$};

\draw[green,thick] (5,3) node[fill=green] (21) {}
	-- (6,3) node[fill=green] (22) {}
	-- (7,1) node[fill=green] (24) {}
		   (7,3) node[fill=green] (23) {}
	-- (8,0) node[fill=green] (26) {}
	-- (9,2) node[fill=green] (27) {}
	-- (10,1) node[fill=green] (29) {}
	-- (9,0) node[fill=green] (28) {}
	-- (8,2) node[fill=green] (25) {}
	-- (23)
		   (10,-1) node[fill=green] (30) {}
	-- (11,0) node[fill=green] (31) {}
	-- (12,0) node[fill=green] (32) {};
\draw[green,thick,double] (22) -- (23)
			  (24) -- (26)
			  (25) -- (27)
			  (26) -- (28)
			  (28) -- (30)
			  (29) -- (31);
\node[draw=none,text=green,thick] at (10.5,2.1) {$\mathrm{OG}(2,7)$};

\draw (4) -- (13)
	     (6) -- (14)
	     (8) -- (15)
	     (9) -- (16)
	     (11) -- (18)
	     (12) -- (19)
	     (10) -- (17)
	     (14) -- (21)
	     (15) -- (22)
	     (17) -- (23)
	     (16) -- (24)
	     (18) -- (26)
	     (19) -- (28)
	     (20) -- (30);
\draw (7) -- (21)
		    (9) -- (22)
		    (11) -- (23)
		    (12) -- (25);

\end{tikzpicture}
 \label{OG(3,9)}
\end{figure}

Finally, let us recall an exceptional example, computed in \cite{CMP2} : the Cayley plane $X=E_6/P_{\omega_1}=\mathbb{OP}^2$ (see Figure \ref{E6-P1}). There are three $P_{\omega_1}$ orbits. Indeed, we know that a partial $E_6$-flag associated to $P_{\omega_1}$ simply consists in a point $p_0 \in X$. The $P_{\omega_1}$-orbits are 
\begin{align*}
 \OO_0 &= \left\{ p \in X \mid p \not\in \text{ line through } p_0 \right\} \\
 \OO_1 &= \left\{ p \in X \mid p \in \text{ line through } p_0, p\neq p_0 \right\} \\
 \OO_2 &= \left\{ p_0 \right\}.
\end{align*}
We can also describe these orbits as vector bundles over generalized flag varieties
\begin{align*}
 \OO_0 &\ra \Q_8 \\
 \OO_1 &\ra \mathbb{S}_{10} \\
 \OO_2 &\ra \text{pt},
\end{align*}
where $\Q_8\cong\mathbb{OP}^1$ is the $8$-dimensional quadric and $\mathbb{S}_{10} \cong \OG(5,10)$ is the $10$-dimensional spinor variety. Indeed, the last fibration is trivial and the second stems from the description of $\OO_1$ as a cone over $\mathbb{S}_{10}$ (see \cite[Lemma 4.1]{IM}). Finally, we know from \cite{IM} that the Cayley plane also parametrises the family of $\Q_8$'s it contains, hence to $p_0$ is associated an $8$-dimensional quadric $Q_0$. The same goes for $p$, to which corresponds a quadric $Q$. These quadrics are isomorphic to projective octonionic lines $\mathbb{OP}^1$, and two general such lines meet in one point in $\mathbb{OP}^2$, hence the first fibration.

\begin{figure}
 \centering
 \caption{$P_{\omega_{6}}$-orbits in $E_6/P_{\omega_1}$}
  \begin{tikzpicture}[scale=2/3]
 
\tikzstyle{every node}=[draw,circle,fill=white,minimum size=6pt,inner sep=0pt]

\draw[blue,thick] (0,0) node[fill=blue] (0) {}
        -- (1,0) node[fill=blue] (1) {}
        -- (2,0) node[fill=blue] (2) {}
        -- (3,0) node[fill=blue] (3) {}
        -- (4,1) node[fill=blue] (4) {}
        -- (5,0) node[fill=blue] (6) {}
        -- (6,1) node[fill=blue] (7) {}
        -- (7,2) node[fill=blue] (8) {}
	-- (8,3) node[fill=blue] (9) {};
\draw[blue,thick] (3) -- (4,-1) node[fill=blue] (5) {}
	    -- (6);
\node[draw=none,text=blue,thick] at (2,1.5) {$\mathbb{Q}_8$};

\draw[red,thick] (5,-2) node[fill=red] (10) {}
        -- (6,-1) node[fill=red] (11) {}
        -- (7,0) node[fill=red] (12) {}
        -- (8,1) node[fill=red] (13) {}
        -- (9,2) node[fill=red] (15) {}
        -- (10,1) node[fill=red] (17) {}
        -- (11,0) node[fill=red] (19) {}
        -- (12,1) node[fill=red] (21) {}
	-- (13,0) node[fill=red] (23) {}
	-- (14,0) node[fill=red] (24) {}
	-- (15,0) node[fill=red] (25) {};
\draw[red,thick] (12) -- (8,-1) node[fill=red] (14) {}
	-- (9,0) node[fill=red] (16) {}
	-- (10,-1) node[fill=red] (18) {}
	-- (11,-2) node[fill=red] (20) {}
	-- (12,-1) node[fill=red] (22) {}
	-- (23);
\draw[red,thick] (13) -- (16)
	-- (17)
	   (18) -- (19)
	-- (22)
	-- (23);
\node[draw=none,text=red,thick] at (7,-2) {$\mathrm{OG}(5,10)$};

\draw[green,thick] (16,0) node[fill=green] (26) {};
\node[draw=none,text=green,thick] at (16,1) {pt};

\draw (5) -- (10)
      (6) -- (11)
      (7) -- (12)
      (8) -- (13)
      (9) -- (15)
      (25) -- (26);

\end{tikzpicture}
 \label{E6-P1}
\end{figure}
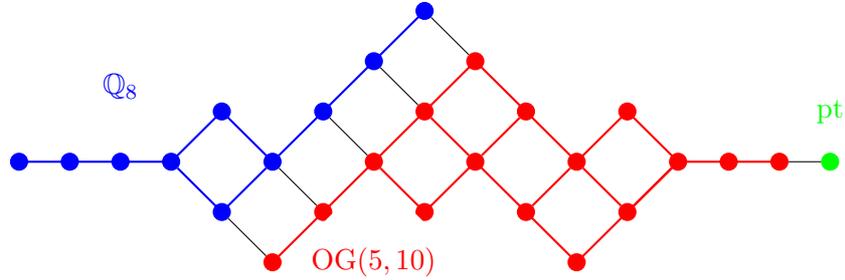


\bibliographystyle{alpha}
\bibliography{biblio}

\def\cprime{$'$} \def\cprime{$'$}
\begin{thebibliography}{CMP09}

\bibitem[Bou68]{bbk}
N.~Bourbaki.
\newblock {\em {El{\'e}ments de math{\'e}matiques, fasc. 34: groupes et
  alg{\`e}bres de Lie, chap. 4, 5 et 6}}.
\newblock Hermann, 1968.

\bibitem[CMP07]{CMP2}
Pierre-Emmanuel Chaput, Laurent Manivel, and Nicolas Perrin.
\newblock Quantum cohomology of minuscule homogeneous spaces. {II}. {H}idden
  symmetries.
\newblock {\em Int. Math. Res. Not. IMRN}, (22):Art. ID rnm107, 29, 2007.

\bibitem[CMP09]{CMP4}
Pierre-Emmanuel Chaput, Laurent Manivel, and Nicolas Perrin.
\newblock {Affine symmetries of the equivariant quantum cohomology ring of
  rational homogeneous spaces}.
\newblock {\em Math. Res. Lett.}, 16(1):7--21, 2009.

\bibitem[FP97]{FP}
W.~Fulton and R.~Pandharipande.
\newblock {Notes on stable maps and quantum cohomology, Algebraic
  geometry—Santa Cruz 1995}.
\newblock In {\em Proc. Sympos. Pure Math}, volume~62, pages 45--96, 1997.

\bibitem[Ful84]{fulton}
William Fulton.
\newblock {\em {Intersection Theory}}.
\newblock Springer-Verlag, Berlin, 1984.

\bibitem[Gar01]{garibaldi}
R.~Garibaldi.
\newblock Structurable algebras and groups of type e6 and e7.
\newblock {\em Journal of Algebra}, 236(2):651--691, 2001.

\bibitem[IM05]{IM}
Atanas Iliev and Laurent Manivel.
\newblock The {C}how ring of the {C}ayley plane.
\newblock {\em Compos. Math.}, 141(1):146--160, 2005.

\bibitem[Mit08]{mitchell}
S.A. Mitchell.
\newblock {Parabolic orbits in flag varieties}.
\newblock {\em preprint}, 1, 2008.

\bibitem[Per02]{perrin}
Nicolas Perrin.
\newblock Courbes rationnelles sur les vari\'et\'es homog\`enes.
\newblock {\em Ann. Inst. Fourier (Grenoble)}, 52(1):105--132, 2002.

\bibitem[Sei97]{seidel}
P.~Seidel.
\newblock {$\pi_1$} of symplectic automorphism groups and invertibles in
  quantum homology rings.
\newblock {\em Geom. Funct. Anal.}, 7(6):1046--1095, 1997.

\end{thebibliography}

\end{document}